\newtheorem{theorem}{Theorem}[section]
\newtheorem{prop}[theorem]{Proposition}%[section]
\newtheorem{cor}[theorem]{Corollary}%[section]
\numberwithin{equation}{section}
\newcommand{\R}{\mathbb{R}}
\newcommand{\Rd}{\mathbb{R}^d}
\newcommand{\nN}{n \in \mathbb{N}}
\newcommand{\N}{\mathbb{N}}
\newcommand{\C}{\mathbb{C}}
\newcommand{\g}{\mathfrak{g}}
\newcommand{\p}{\mathfrak{p}}
\newcommand{\fk}{\mathfrak{k}}
\newcommand{\tr}{\mbox{tr}}
\newcommand{\bean}{\begin{eqnarray*}}
\newcommand{\eean}{\end{eqnarray*}}
\newcommand{\la}{\langle}
\newcommand{\ra}{\rangle}
\newcommand{\Z}{\mathbb{Z}}
\newcommand{\G}{\widehat{G}}
\begin{document}

\date{}

\title{Transition Densities and Traces for Invariant Feller Processes on Compact Symmetric Spaces}

\author{David Applebaum, Trang Le Ngan\\ School of Mathematics and Statistics,\\ University of
Sheffield,\\ Hicks Building, Hounsfield Road,\\ Sheffield,
England, S3 7RH\\ ~~~~~~~\\e-mail: D.Applebaum@sheffield.ac.uk, tlengan1@sheffield.ac.uk}

\maketitle

\begin{abstract}
We find necessary and sufficient conditions for a finite $K$--bi--invariant measure on a compact Gelfand pair $(G, K)$ to have a square--integrable density. For convolution semigroups, this is equivalent to having a continuous density in positive time. When $(G,K)$ is a compact Riemannian symmetric pair, we study the induced transition density for $G$--invariant Feller processes on the symmetric space $X = G/K$. These are obtained as projections of $K$--bi--invariant L\'{e}vy processes on $G$, whose laws form a convolution semigroup.  We obtain a Fourier series expansion for the density, in terms of spherical functions, where the spectrum is described by Gangolli's L\'{e}vy--Khintchine formula. The density of returns to any given point on $X$ is given by the trace of the transition semigroup, and for subordinated Brownian motion, we can calculate the short time asymptotics of this quantity using recent work of Ba\~nuelos and Baudoin. In the case of the sphere, there is an interesting connection with the Funk--Hecke theorem.
\end{abstract}

\section{Introduction}

Let $X = (X(t), t \geq 0)$ be a (time--homogeneous) Feller--Markov process, which takes values in a locally compact space $M$ that is equipped with a positive regular Borel measure $\mu$ on its Borel $\sigma$--algebra. Key quantities of interest are the transition kernel $K_{t}(x, A)$ which is the probability that $X(t) \in A$ given that $X(0) =x$, where $x \in M$ and $A$ is a Borel set, and the Feller semigroup $(P_{t}, t \geq 0)$ defined on the space of real--valued continuous functions on $M$ that vanish at infinity by
$$ P_{t}f(x) = \int_{M}f(y)K_{t}(x, dy).$$
There are a number of interesting and fundamental (linked) questions that we can ask:
\begin{enumerate}
\item[(I)] Does a transition density $k_{t}(\cdot,\cdot)$ exist for $t > 0$, so that $$K_{t}(x, A) = \int_{A}k_{t}(x,y)\mu(dy),$$ and does the function $k: (0, \infty) \times M \times M \rightarrow [0, \infty)$ have good regularity properties, such as continuity, differentiability or finite $L^{p}$-norm?
\item[(II)] Do the operators $P_{t}$ extend to form a semigroup on $L^{2}(M, \mu)$. If so, when is it self--adjoint or trace--class?
\item[(III)]  Does there exists a complete set of (normalised) eigenfunctions $\{\phi_{n}, \nN\}$ for $P_{t}$ (with $t > 0$) so that for all $x,y \in M$ we can write
\begin{equation} \label{pre}
k_{t}(x, y) = \sum_{\nN}\lambda_{n}(t)\phi_{n}(x)\phi_{n}(y),
\end{equation}
where $P_{t}\phi_{n} = \lambda_{n}(t)\phi_{n}$?
  \item[(IV)] When do we have a trace formula:  $$\int_{M}k_{t}(x,x)\mu(dx) = \mbox{trace}(P_{t})?$$
\end{enumerate}

Assuming (I) and some imposed regularity, a quite general approach was taken to (II) and (III) in \cite{Get} with $M$ assumed to be a compact separable metric space. A key role here is played by the requirement that $k_{t}$ exists and is square--integrable. The case where $X$ is a compact Riemannian manifold, $\mu$ is the Riemannian volume measure $\mu$ and $X$ is Brownian motion has been extensively studied. In this case $k_{t}$ is the ``ubiquitous'' heat kernel \cite{JL}, and (I) to (IV) all have positive answers (see e.g. Chapter 3 of \cite{Ros} or Chapter VI of \cite{Chav}).

In \cite{App3} (and references therein) positive answers to (1) to (IV) were obtained for a class of central symmetric L\'{e}vy processes in compact Lie groups (where $\mu$ is normalised Haar measure), which are obtained by subordinating Brownian motion. Here the key techniques used were harmonic analytic, arising from the representation theory of $G$. The restriction to central (conjugate--invariant) processes was key as it ensured that the Fourier transform of the measure was a scalar, and these scalars give us the required eigenvalues, through the L\'{e}vy--Khintchine formula.

In this paper, we present another class of processes for which (I) to (IV) are valid. The object of interest is a Feller--Markov process defined on a compact symmetric space $M$, which has $G$--invariant transition probabilities, where $G$ is the identity component of the isometry group of $M$, and is conditioned to start at the point fixed by a closed subgroup $K$ of $G$ (so $M = G/K)$.  The reference measure $\mu$ is the unique (normalised) $G$--invariant measure on $M$. It is well--known that all such processes arise as the projection to $M$ of a L\'{e}vy process in $G$ whose laws form a $K$--bi--invariant convolution semigroup of probability measures (see \cite{Berg1,Liao, LiaoN}). We cannot assert that this is a special case of the theory developed in \cite{App3}, as the measures we consider are not, in general, central (see Proposition \ref{nocent1}); however many of the techniques developed in \cite{App3}, may be applied here. In particular, we find that the eigenvalues we need are given by Gangolli's L\'{e}vy--Khintchine formula \cite{Gang1, LW}, and the eigenvectors are the spherical functions (so we deal with a complex form of (\ref{pre})).

The plan of the paper is as follows. In section 2, we develop some general considerations concerning measures on homogeneous spaces. Some, but not all, of the results presented there are known. The purpose of section 3, is to extend the work of \cite{App2} to find necessary and sufficient conditions, in terms of the Fourier transform, for a finite measure associated to a compact Gelfand pair to have a square--integrable density. In section 4, we consider convolution semigroups of probability measures, where we present a recent result of Liao \cite{LiaoN} which tells us that the measures have a continuous density if and only if they have a square--integrable one. So from the work of sections 3 and 4 together, we have necessary and sufficient conditions for a measure within a convolution semigroup, as above, to have a continuous density (in positive time). In section 5 we specialise to compact symmetric spaces, where we develop the Fourier expansion of the transition density, and obtain the required trace formula. It is worth pointing out that we don't require our measures to be symmetric (or equivalently $P_{t}$ to be self--adjoint), in contrast to \cite{Get} and \cite{App3}. We also apply the theory of \cite{BaBa} to study short--time asymptotics of the transition density corresponding to subordinated Brownian motion on $M$. Finally in section 6, we present the example of the $n$--sphere in a little more detail, and consider the implications of the Funk--Hecke theorem within our context.

\vspace{5pt}

{\bf Notation.} If $X$ is a locally compact Hausdorff space, then ${\mathcal B}(X)$ is its Borel $\sigma$--algebra,  $C_{c}(X)$ will denote the linear space of continuous functions from $X$ to $\R$ having compact support, ${\mathcal M}(X)$ is the linear space of positive regular Borel measures on $(X, {\mathcal B}(X))$, and ${\mathcal M}_{F}(X)$ is the subspace comprising finite measures. If $G$ is a locally compact Hausdorff group, we will denote its neutral element by $e$. We equip the space ${\mathcal M}(G)$ with the binary operation of convolution $*$, so that if $\mu_{1}, \mu_{2} \in {\mathcal M}(G)$, their convolution $\mu_{1} * \mu_{2}$ is the unique element of ${\mathcal M}(G)$ such that for all $f \in C_{c}(G)$,
$$ \int_{G}f(g)(\mu_{1} * \mu_{2})(dg) = \int_{G}\int_{G}f(gh)\mu_{1}(dg)\mu_{2}(dh).$$
Then ${\mathcal M}(G)$ is a monoid, with neutral element given by the measure $\delta_{e}$, where for all $A \in {\mathcal B}(G), \delta_{e}(A): = \left\{ \begin{array}{c c} 1 & \mbox{if}~e \in A\\
0 & \mbox{if}~e \notin A \end{array} \right.$. If $\mu \in {\mathcal M}(G)$, then $\mu^{\prime} \in {\mathcal M}(G)$, where $\mu^{\prime}(A): = \mu(A^{-1})$ for all $A \in {\mathcal B}(G)$, and $A^{-1}:= \{g^{-1}; g \in G\}$.

\noindent All $L^{p}$ spaces appearing in this paper comprise complex--valued functions.

\noindent The space of all $d \times d$ complex--valued matrices is denoted by $M_{d}(\C)$, and the trace of $A \in M_{d}(\C)$ is written tr$(A)$.

\section{Absolute Continuity of Measures on Homogeneous Spaces}

Let $G$ be a locally compact Hausdorff group, which we equip with a a left Haar measure $m_{G}$. We will tend to write $m_{G}(dg) = dg$ within integrals. The modular homomorphism from $G$ to the multiplicative group $(0, \infty)$ will be denoted $\Delta_{G}$. It is uniquely defined by the fact that
\begin{equation} \label{mod}
\int_{G}f(gh^{-1})dg = \Delta_{G}(h)\int_{G}f(g)dg,
\end{equation}
for all $h \in G, f \in C_{c}(G)$.

Let $K$ be a closed subgroup of $G$, with fixed Haar measure $m_{K}$, and $X$ denote the homogeneous space $G/K$ of left cosets of $G$, i.e. $X = \{gK, g \in G\}$. We equip $X$ with the usual (Hausdorff) topology which is such that the canonical surjection $\xi:G \rightarrow X$ is both continuous and open. We will write $o: = \xi(K)$. The group $G$ acts on $X$ by homeomorphisms  via the action
$$ \tau(g)g^{\prime}K = gg^{\prime}K,$$ for all $g, g^{\prime} \in G$. We will make frequent use of the fact that for all $g \in G$,
\begin{equation} \label{use}
 \xi \circ l_{g} = \tau(g) \circ \xi,
\end{equation}

\noindent where $l_{g}(h) = gh$ for all $g,h \in G$. If $K$ is compact, we will always normalise so that $m_{K}(K) = 1$.

Define $P:C_{c}(G) \rightarrow C_{c}(X)$ by
$$ (Pf)(gK): = \int_{K}f(gk)dk,$$
for each $g \in G, f \in C_{c}(G)$. It is shown that $P$ is surjective in \cite{Foll}, pp.61--2.

Let $C_{c,K}(G)$ denote the linear subspace of $C_{c}(G)$ comprising functions that are $K$--right--invariant. If $K$ is compact, then any $F \in C_{c}(X)$ gives rise to $F^{\xi} \in C_{c,K}(G)$ by the assignment
$F^{\xi} = F \circ \xi$. It is not difficult to see that this induces a linear isomorphism between the two spaces. If $K$ is not compact, then $\xi^{-1}(C_{c}(X))$% \subseteq C_{K}(G)$
may contain no functions of compact support (e.g. consider the case $G = \R$ and $K = \Z$).

If $\mu \in {\mathcal M}(G)$ then $\mu_{\xi}:= \mu \circ \xi^{-1} \in {\mathcal M}(X)$. Moreover for all $F \in C_{c}(X)$ we have (see e.g. \cite{Bog}, Proposition 3.6.1, pp. 190--1)
\begin{equation} \label{int1}
\int_{G}F^{\xi}(g)\mu(dg) = \int_{X}F(x)\mu_{\xi}(dx),
\end{equation}
provided $F^{\xi} \in L^{1}(G, \mu)$.

If we begin with functions defined on $G$, rather than on $X$, then given $\mu \in {\mathcal M}(G)$, there exists a unique $\widetilde{\mu} \in {\mathcal M}(X)$ such that for all $f \in C_{c}(G)$,
\begin{equation} \label{int2}
\int_{G}f(g)\mu(dg) = \int_{X}(Pf)(gK)\tilde{\mu}(dgK),
\end{equation}
if and only if for all $f \in C_{c}(G), k \in K$,

\begin{equation} \label{int3}
\int_{G}f(gk^{-1})\mu(dg) = \Delta_{K}(k)\int_{G}f(g)\mu(dg),
\end{equation}
(see \cite{Reit}, pp. 157--8 or \cite{Weil} pp. 42--5).

\noindent Some simple consequences of (\ref{int3}) are:
\begin{enumerate}
\item If $\mu$ is a non--trivial $K$--right--invariant measure on $G$, then $\widetilde{\mu}$ exists if and only if $\Delta_{K}(k) = 1$ for all $k \in K$.
\item If (\ref{int3}) holds, then taking $f$ therein to be $K$--right--invariant, we see that either $\Delta_{K}(k) = 1$ for all $k \in K$, or $\int_{G}f(g)\mu(dg) = 0$ for all $f \in C_{c,K}(G)$.
\end{enumerate}

\noindent If we take $\mu = m_{G}$, then it follows from (\ref{int3}) and (\ref{mod}) that $\widetilde{m_{G}}$ exists and is unique if and only if
\begin{equation} \label{mod2}
\Delta_{K}(k) = \Delta_{G}(k),
\end{equation}
for all $k \in K$. In this case we will write $\sigma: = \widetilde{m_{G}}$. It is easily seen from (\ref{int2}) that $\sigma$ is $G$-invariant, in that
$$ \sigma(\tau(g)A) = \sigma(A),$$
for all $g \in G, A \in {\mathcal B}(X)$, and in fact, if it exists, $\sigma$ is (up to multiplication by a non--negative constant) the unique such measure on $(X, {\mathcal B}(X))$. If $K$ is compact, then (\ref{mod2}) holds with both sides of the equation being equal to one.

We could repeat the above discussion, with $X$ being replaced by $X^{\prime} = K \backslash G$, the space of right cosets of $G$, which is again a locally compact Hausdorff space, with topology such that the natural surjection
$\xi^{\prime}: G \rightarrow X^{\prime}$ is open and continuous. The natural action of $G$ on $X^{\prime}$ is $\tau^{\prime}(h)(Kg) = Kgh$, for $g,h \in G$, and if (\ref{mod2}) holds, there is a unique (up to non--negative scalar multiplication) $G$--invariant measure $\sigma^{\prime}$ on $X^{\prime}$ such that $\sigma^{\prime}(\tau(g^{\prime})A) = \sigma^{\prime}(A),$
for all $g \in G, A \in {\mathcal B}(X)$. This measure is related to right Haar measure on $G$ in the same way that $\sigma$ is related to left Haar measure. The mapping $C$ which takes $gX$ to $Xg$ for all $g \in G$ is easily seen to be a homeomorphism between $X$ and $X^{\prime}$, and we then have $\sigma^{\prime} = \sigma  \circ C^{-1}$.

From now on we will always assume that $K$ is compact. For $f \in C_{c}(G), g \in G$, let $P_{K}f(g): = \int_{K}f(gk)dk \in C_{c,K}(G)$. Then for all $F \in C_{c}(G), P_{K}F = PF \circ \xi$.  Let ${\cal M}_{K}(G)$ denote the subset of ${\mathcal M}(G)$ comprising measures that are $K$--right--invariant. Any measure $\mu \in {\mathcal M}_{K}(G)$, is determined (through the Riesz representation theorem) by its action on the space $C_{c,K}(G)$ since for all $f \in C_{c}(G)$,
\begin{equation} \label{int4}
\int_{G}f(g)\mu(dg) = \int_{G}P_{K}f(g)\mu(dg).
\end{equation}
If (\ref{int3}) is satisfied, and $\mu \in {\mathcal M}_{K}(G)$, then it follows from (\ref{int1}), (\ref{int2}) and (\ref{int4})  that  $\widetilde{\mu} = \mu_{\xi}$. In particular, $m_{G} \in {\mathcal M}_{K}(G)$, as can be seen from (\ref{mod}) and the fact that $\Delta_{G}(k) = 1$ for all $k \in K$ (see also Proposition 1.10 (a) in \cite{LiaoN}), so $\sigma = (m_{G})_{\xi}$.

 The mapping $\mu \rightarrow \mu_{\xi}$ is an isometric isomorphism between ${\mathcal M}_{K}(G)$ and ${\mathcal M}(X)$, and $f \rightarrow f \circ \xi$ is an isometric isomorphism between $L^{p}_{K}(G):=L^{p}_{K}(G, m_{G})$ with $L^{p}(X):=L^{p}(X, \sigma)$ for $p \geq 1$.
For each $g,h \in G$ define $L_{g}f(h) = f(g^{-1}h)$ for $f \in L^{p}(G)$. It is well--known (and easy to deduce) that $L_{g}$ is an isometric isomorphism of $L^{p}(G)$, and if $1  \leq p < \infty$, then the mapping $g \rightarrow L_{g}f$ is continuous from $G$ to $L^{p}(G)$ (see e.g. Proposition 1.2.1 in \cite{App1}). For each $g \in G, F \in L^{p}(X)$, define $T_{g}F: = F \circ \tau(g^{-1})$. Then $T_{g}$ is an isometric isomorphism. Moreover we have

\begin{prop} \label{cont}
If $ 1 \leq p < \infty$, for each $F \in L^{p}(X)$ the mapping $g \rightarrow T_{g}F$ is continuous from $G$ to $L^{p}(X)$.
\end{prop}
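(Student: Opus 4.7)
The plan is to transfer the problem from $X$ back to $G$ using the isometric isomorphism $\Phi: L^{p}(X) \to L^{p}_{K}(G)$ defined by $\Phi(F) = F \circ \xi$, which was just recalled in the paragraph preceding the statement. Once this transfer is done, the result will follow from the known continuity of the left translation representation $g \mapsto L_{g}f$ on $L^{p}(G)$ (cited from Proposition 1.2.1 in \cite{App1}).

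First I would verify that $\Phi$ intertwines the two translation actions, i.e.\ that for every $g \in G$ and $F \in L^{p}(X)$,
\begin{equation*}
\Phi(T_{g}F) = L_{g}\Phi(F).
\end{equation*}
This is a direct computation using the equivariance identity (\ref{use}): for $h \in G$,
\begin{equation*}
\Phi(T_{g}F)(h) = F(\tau(g^{-1})\xi(h)) = F(\xi(g^{-1}h)) = \Phi(F)(g^{-1}h) = L_{g}\Phi(F)(h),
\end{equation*}
where the second equality uses $\xi \circ l_{g^{-1}} = \tau(g^{-1})\circ \xi$. Note that $L_{g}\Phi(F)$ is automatically $K$--right--invariant, since left and right translations commute, so this really is an identity in $L^{p}_{K}(G)$.

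Next I would use the fact that $\Phi$ is an isometry to conclude
\begin{equation*}
\|T_{g}F - T_{g_{0}}F\|_{L^{p}(X)} = \|L_{g}\Phi(F) - L_{g_{0}}\Phi(F)\|_{L^{p}(G)},
\end{equation*}
and then invoke the continuity of $g \mapsto L_{g}\Phi(F)$ from $G$ to $L^{p}(G)$, which holds for $1 \leq p < \infty$. This gives the required continuity at the arbitrary point $g_{0} \in G$.

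There is no serious obstacle here: the only real content beyond bookkeeping is the intertwining identity, and the assumption $1 \leq p < \infty$ (together with the compactness of $K$, which ensures the isometric isomorphism $\Phi$ is well defined and that Haar measure on $G$ restricts compatibly with the $G$--invariant measure $\sigma$ on $X$) is needed solely to quote the continuity of left translation on $L^{p}(G)$. The argument fails for $p = \infty$, exactly as for $L^{\infty}(G)$, which explains the stated range of $p$.
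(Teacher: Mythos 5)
Your proof is correct and is essentially the same as the paper's: both transfer the problem to $G$ via the isometric identification $F \mapsto F^{\xi}$, establish the intertwining $(T_{g}F)^{\xi} = L_{g}F^{\xi}$ using (\ref{use}), and then quote the continuity of left translation on $L^{p}(G)$ for $1 \leq p < \infty$. The only cosmetic difference is that the paper first reduces to continuity at $e$ via $T_{gh} = T_{g}\circ T_{h}$, whereas you argue directly at an arbitrary point $g_{0}$; both are fine.
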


\begin{proof} Since for all $g, h \in G$, $T_{gh} = T_{g} \circ T_{h}$ and $T_{h}$ is an isometry, its sufficient to prove continuity at $e$. First observe that by (\ref{use})
\bean \int_{X}|T_{g}F(x) - F(x)|^{p}\sigma(dx) & = & \int_{G}|(T_{g}F)^{\xi}(g^{\prime}) - F^{\xi}(g^{\prime})|^{p}dg^{\prime} \\
& = & \int_{G}|(F \circ \tau(g^{-1}) \circ \xi) (g^{\prime}) - F^{\xi}(g^{\prime})|^{p}dg^{\prime} \\
& = & \int_{G}|L_{g}F^{\xi}(g^{\prime}) - F^{\xi}(g^{\prime})|^{p}dg^{\prime}, \eean
and the result follows by continuity of the map $g \rightarrow L_{g}F^{\xi}$. \end{proof}

Now suppose that $\mu \in {\mathcal M}_{K}(G)$ is absolutely continuous with respect to $m_{G}$ and write the Radon--Nikodym derivative $h: = d\mu/dm_{G}$. In the sequel we will frequently identify $h$ with a particular member of the equivalence class that it defines in $L^{1}(G)$, and in common with standard probabilistic usage, we may refer to any version of $h$ as the {\it density} of $\mu$ (with respect to $m_{G}$).

\begin{prop} \label{RNri}
If $\mu$ is $K$--right--invariant, then $h$ is $K$--right--invariant almost everywhere.
\end{prop}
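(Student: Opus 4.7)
The plan is to use the compactness of $K$ to turn $K$-right-invariance of $\mu$ into an $L^{1}$ identity for $h$, and then average over $K$ to produce a pointwise $K$-right-invariant version of $h$.

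First I would note that, because $K$ is compact and $\Delta_{G}\colon G\to(0,\infty)$ is a continuous homomorphism, the image $\Delta_{G}(K)$ is a compact subgroup of $(0,\infty)$, hence $\Delta_{G}(k)=1$ for all $k\in K$. By (\ref{mod}), this means $m_{G}$ itself is $K$-right-invariant: $\int_{G}f(gk)\,dg=\int_{G}f(g)\,dg$ for every $f\in C_{c}(G)$ and $k\in K$.

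Next, fix $k\in K$. The hypothesis that $\mu$ is $K$-right-invariant, combined with $\mu=h\cdot m_{G}$, gives for every $f\in C_{c}(G)$:
\begin{eqnarray*}
\int_{G}f(g)h(g)\,dg &=& \int_{G}f(gk)h(g)\,dg \\
&=& \int_{G}f(g')h(g'k^{-1})\,dg',
\end{eqnarray*}
where the second equality uses the change of variables $g'=gk$ together with the $K$-right-invariance of $m_{G}$ established above. Since $C_{c}(G)$ is dense in $L^{\infty}(G,m_{G})^{*}$-sense, this forces $h(\cdot k^{-1})=h(\cdot)$ in $L^{1}(G,m_{G})$ for each fixed $k\in K$, i.e.\ there is an $m_{G}$-null set $N_{k}$ (depending on $k$) off which $h(gk)=h(g)$.

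The remaining (and main) obstacle is that $\bigcup_{k\in K}N_{k}$ need not be a null set, so I cannot conclude pointwise invariance directly. To get around this, I would appeal to Fubini. The map $(g,k)\mapsto h(gk)-h(g)$ is jointly Borel measurable (after selecting a Borel representative of $h$), and by the preceding paragraph
$$\int_{K}\int_{G}|h(gk)-h(g)|\,dg\,dk=0,$$
so $h(gk)=h(g)$ for $(m_{G}\otimes m_{K})$-a.e.\ $(g,k)$. Now define
$$\tilde{h}(g):=\int_{K}h(gk)\,dk.$$
Translation-invariance of $m_{K}$ makes $\tilde{h}$ genuinely $K$-right-invariant (for every $g\in G$, not just a.e.), and the Fubini identity above together with Jensen's inequality gives
$$\int_{G}|\tilde{h}(g)-h(g)|\,dg\le\int_{G}\int_{K}|h(gk)-h(g)|\,dk\,dg=0.$$
Hence $\tilde{h}=h$ $m_{G}$-a.e., and $\tilde{h}$ is a $K$-right-invariant density for $\mu$, which is precisely the assertion that $h$ is $K$-right-invariant a.e.
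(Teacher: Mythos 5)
Your proof is correct, and its core step --- testing against $f\in C_{c}(G)$, using that $\Delta_{G}$ is identically $1$ on the compact subgroup $K$ to change variables, and invoking the $K$--right--invariance of $\mu$ --- is exactly the paper's computation, merely run in the opposite order. Where you add something is at the end: the paper stops at ``for each fixed $k$, $h(\cdot k)=h$ a.e.'' and declares the result proved, whereas you explicitly upgrade this (via joint measurability, Tonelli, and the average $\tilde{h}(g)=\int_{K}h(gk)\,dk$, which is everywhere $K$--right--invariant and equals $h$ a.e.) to the statement that $h$ agrees a.e.\ with a genuinely $K$--right--invariant function; this stronger form is precisely what is used afterwards in Proposition \ref{RN1}, where $h$ is written as $H^{\xi}$, so the extra step is a worthwhile clarification rather than a detour. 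The only blemish is the phrase about $C_{c}(G)$ being ``dense in the $L^{\infty}(G,m_{G})^{*}$-sense'': what you actually need (and what is true) is simply that a locally integrable density is determined $m_{G}$--a.e.\ by its integrals against $C_{c}(G)$, by the uniqueness part of the Riesz representation theorem.
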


\begin{proof} For all $f \in C_{c}(G), k \in K$, using (\ref{mod}), we have
\bean \int_{G}f(g)h(gk)dg & = & \int_{G}f(gk^{-1})h(g)\Delta_{G}(k^{-1})dg\\
& = & \int_{G}f(gk^{-1})\mu(dg) \\
& = & \int_{G}f(g)\mu(dg) = \int_{G}f(g)h(g)dg, \eean
and the result follows. \end{proof}

\begin{prop} \label{RN1} The measure $\mu \in {\cal M}_{K}(G)$ is absolutely continuous with respect to Haar measure on $G$, having Radon Nikodym $H^{\xi}$ if and only if $\mu_{\xi}$ is absolutely continuous with respect to the $G$--invariant measure $\sigma$ on $X$, having Radon-Nikodym derivative $H$. Furthermore $H$ is continuous/$L^{p}$ for $ 1 \leq p < \infty$, if and only if $H^{\xi}$ is.
\end{prop}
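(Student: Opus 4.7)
My plan is to prove the density equivalence by testing against functions in $C_c(X)$ or $C_c(G)$, shuttling between integrals on $G$ and on $X$ using equations (\ref{int1}) and (\ref{int4}), together with the identification $\sigma = (m_{G})_{\xi}$. The regularity assertions should then drop out from the isometric isomorphism $L^{p}_{K}(G) \cong L^{p}(X)$ via $f \mapsto f \circ \xi$ recorded in the excerpt, and from the defining property of the quotient topology on $X = G/K$.

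For the forward implication, assume $d\mu = H^{\xi} \, dm_{G}$. By Proposition \ref{RNri}, I may choose a version of the density that is $K$--right--invariant everywhere, and hence of the form $H \circ \xi$ for a measurable $H$ on $X$. For any $F \in C_{c}(X)$, two applications of (\ref{int1}) give
\begin{eqnarray*}
\int_{X} F \, d\mu_{\xi} \;=\; \int_{G} F^{\xi}(g) H^{\xi}(g) \, dg \;=\; \int_{G} (FH)^{\xi}(g) \, dg \;=\; \int_{X} F(x) H(x) \, \sigma(dx),
\end{eqnarray*}
which identifies $H$ as the Radon--Nikodym derivative of $\mu_{\xi}$ with respect to $\sigma$. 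Conversely, assume $d\mu_{\xi} = H \, d\sigma$. For $f \in C_{c}(G)$, equation (\ref{int4}) together with $P_{K}f = (Pf) \circ \xi$ and (\ref{int1}) give
\begin{eqnarray*}
\int_{G} f \, d\mu \;=\; \int_{G} P_{K}f \, d\mu \;=\; \int_{X} (Pf)(x) H(x) \, \sigma(dx) \;=\; \int_{G} P_{K}f(g) H^{\xi}(g) \, dg.
\end{eqnarray*}
Now a short Fubini computation, using $\Delta_{G}(k) = 1$ for $k \in K$ (so that right translation by $K$ preserves $m_{G}$) and the $K$--right--invariance of $H^{\xi}$, shows $\int_{G} P_{K}f \cdot H^{\xi} \, dg = \int_{G} f \cdot H^{\xi} \, dg$, so $d\mu = H^{\xi} \, dm_{G}$.

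For the regularity claim, the $L^{p}$ equivalence is immediate from the isometric isomorphism $f \mapsto f \circ \xi$ between $L^{p}(X)$ and $L^{p}_{K}(G)$ that is already stated in the excerpt. Continuity of $H$ is equivalent to continuity of $H^{\xi}$ because, under the quotient topology, a set $V \subset X$ is open iff $\xi^{-1}(V)$ is open in $G$, so $H^{-1}(U)$ and $(H^{\xi})^{-1}(U) = \xi^{-1}(H^{-1}(U))$ are simultaneously open for every open $U \subset \mathbb{C}$.

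I do not expect a serious obstacle here; the only care required is to fix, from the outset, a $K$--right--invariant representative of the density class so that the pointwise identity $H^{\xi} = H \circ \xi$ is meaningful, which is exactly what Proposition \ref{RNri} supplies. The small manipulation converting $\int_{G} P_{K}f \cdot H^{\xi} \, dg$ back to $\int_{G} f \cdot H^{\xi} \, dg$ is the one step where the compactness of $K$ and the triviality of $\Delta_{G}|_{K}$ are both used simultaneously.
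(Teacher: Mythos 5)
Your proposal is correct and follows essentially the same route as the paper's proof: fix a $K$--right--invariant version of the density via Proposition \ref{RNri}, transfer integrals between $G$ and $X$ using (\ref{int1}) and $\sigma = (m_G)_{\xi}$, use the determination of $K$--right--invariant measures by $C_{c,K}(G)$ (i.e.\ (\ref{int4})) for the converse, and read off the regularity from the bijections $C(X)\cong C_K(G)$ and $L^{p}(X)\cong L^{p}_{K}(G)$. Your explicit Fubini/change--of--variables step in the converse merely spells out what the paper summarises as ``$\mu$ is determined by its action on $C_{c,K}(G)$'', so there is no substantive difference.
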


\begin{proof} Firstly let $\mu \in {\cal M}_{K}(G)$ be absolutely continuous as stated. Then its Radon--Nikodym derivative $h$ is $K$--right-invariant by Proposition \ref{RNri}, and so $h = H^{\xi}$ for some $H \in L^{1}(X)$. Then for all $F \in C_{c}(X)$,
\bean \int_{X}F(x)\mu_{\xi}(dx) & = & \int_{G}F^{\xi}(g)H^{\xi}(g)dg\\
& = & \int_{X}F(x)H(x)\sigma(dx), \eean
and the result follows.

Conversely, if $\mu_{\xi}$ is absolutely continuous with Radon--Nikodym derivative $H$, then
\bean \int_{G}F^{\xi}(g)\mu(dg) & = & \int_{X}F(x)H(x)\sigma(dx)\\
& = & \int_{G}F^{\xi}(g)H^{\xi}(g)dg, \eean
and the result again follows since $\mu$ is determined by its action on $C_{c, K}(G)$.
The result on continuity follows from the fact that the mapping $H \rightarrow H^{\xi}$ is a bijection between $C(X)$ and $C_{K}(G)$. The integrability statement follows similarly.
\end{proof}

We have the following partial generalisations of a known result on locally compact groups due to Raikov--Williamson (see \cite{Wehn} and \cite{App1}, Theorem 4.4.1 p.98).

\begin{prop} \label{RW}
If $\mu \in {\cal M}(X)$ is absolutely continuous with respect to $\sigma$ then for all $E \in {\mathcal B}(X),~\mu(\tau(g)E) \rightarrow \mu(E)$ as $g \rightarrow e$.
\end{prop}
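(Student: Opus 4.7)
The plan is to reduce the problem to continuity of the translation action of $G$ on $L^{1}(X)$, which is exactly the content of Proposition \ref{cont} with $p=1$. Write $H := d\mu/d\sigma \in L^{1}(X)$ for the Radon--Nikodym derivative, and for $E \in {\mathcal B}(X)$ note that $\mathbf{1}_{\tau(g)E}(x) = \mathbf{1}_{E}(\tau(g^{-1})x)$ for all $x \in X$.

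The first key step is to rewrite $\mu(\tau(g)E)$ as an integral against the translated density. Using the definition of $\mu$, the observation above, and the $G$--invariance of $\sigma$, substitute $y = \tau(g^{-1})x$ to obtain
\begin{eqnarray*}
\mu(\tau(g)E) & = & \int_{X}\mathbf{1}_{E}(\tau(g^{-1})x) H(x)\sigma(dx)\\
& = & \int_{X}\mathbf{1}_{E}(y) H(\tau(g)y)\sigma(dy)\\
& = & \int_{E}(T_{g^{-1}}H)(y)\sigma(dy),
\end{eqnarray*}
since $(T_{g^{-1}}H)(y) = H(\tau(g)y)$ by the definition of $T_{g}$.

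The second step is a uniform bound. Subtracting $\mu(E) = \int_{E}H(y)\sigma(dy)$ from the previous display and using $|\mathbf{1}_{E}| \leq 1$ gives
$$ |\mu(\tau(g)E) - \mu(E)| \leq \int_{X}|(T_{g^{-1}}H)(y) - H(y)|\,\sigma(dy) = \|T_{g^{-1}}H - H\|_{L^{1}(X)}.$$
Finally, by Proposition \ref{cont} applied with $p=1$, the orbit map $g \mapsto T_{g}H$ is continuous from $G$ into $L^{1}(X)$, and since $g \mapsto g^{-1}$ is continuous with $e \mapsto e$, the right-hand side tends to $0$ as $g \to e$, yielding the claim.

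The only delicate point is the change of variables in the first step, where one must be careful to apply $G$--invariance of $\sigma$ in the correct direction; once this is set up, the estimate and the invocation of Proposition \ref{cont} are immediate. No assumption about $E$ beyond Borel measurability is needed, and the convergence is in fact uniform in $E \in \mathcal{B}(X)$, which is stronger than what is stated.
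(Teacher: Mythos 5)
Your proof is correct and follows essentially the same route as the paper: write the Radon--Nikodym derivative, bound $|\mu(\tau(g)E)-\mu(E)|$ by the $L^{1}(X)$ distance between the translated density and the density itself, and invoke Proposition \ref{cont} with $p=1$. Your care over the direction of the change of variables (obtaining $T_{g^{-1}}H$ rather than $T_{g}H$) is a harmless refinement, since $g\mapsto g^{-1}$ is continuous at $e$ and the conclusion, including the uniformity in $E$, is unchanged.
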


\begin{proof} Writing $h : = d\mu/d\sigma$, we have
\bean |\mu(\tau(g)E) - \mu(E)| & \leq & \int_{E}|h(\tau(g^{-1})x) - h(x)|\sigma(dx)\\
& \leq & ||T(g)h - h||_{L^{1}(X)} \rightarrow 0~\mbox{as}~ g \rightarrow e, \eean
by Proposition \ref{cont}. \end{proof}

We conjecture that the converse of Proposition \ref{RW} also holds, but the proof of the corresponding result on a group $G$ requires both the left and right action of $G$ on itself, and we do not have analogues of both tools available to us.

The space $E:=K \backslash G /K$ is the set of all double cosets $\{KgK, g \in G\}$. Note that each such double coset is an orbit of $K$ in $X$, i.e. if $x = gK$ then
 $$ KgK = \{\tau(k)x; k \in K\}.$$
The set $E$ is a locally compact Hausdorff space when equipped with the topology which makes the canonical surjection $\zeta$ from $G$ to $E$ continuous and open. If $N$ is the normaliser of $K$ in $G$, then there is an action $\omega$ of $N$ on $E$ so that $\omega(n)KgK = K\tau(n)gK$, for all $g \in G, n \in N$. It is shown in Corollary 3.2 of \cite{Liu} that there is an invariant measure $\alpha$ on $E$ in that for all $n \in N$,
\begin{equation} \label{doubleinv}
\int_{E}f(\omega(n)x)\alpha(dx) = \int_{E}f(x)\alpha(dx).
\end{equation}
Furthermore, (see Theorem 2.1 in \cite{Liu}), for all $f \in C_{c}(E)$,
$$ \int_{E}f(x)\alpha(dx) = \int_{G}(f \circ \zeta)(g)dg.$$

\section{Square--Integrability of Densities on Compact Gelfand Pairs}

 From now on, we will assume that $(G, K)$ forms a compact Gelfand pair, so that $G$ is a compact group, $K$ is a closed subgroup, and the Banach algebra (with respect to convolution) $L^{1}(K \backslash G/K)$ is commutative (see e.g. \cite{Wol} for background on such structures, and for material that now follows). Haar measure $m_{G}$ will be normalised henceforth, so that $m_{G}(G) = 1$. We denote by $\G$ the {\it unitary dual} of $G$, i.e. the set of all equivalence classes of irreducible representations of $G$, with respect to unitary conjugation. If $\pi \in \G$, its representation space $V_{\pi}$ is finite--dimensional, and we will write $d_{\pi}:=$dim$(V_{\pi})$. The celebrated Peter--Weyl theorem tells us that $\{\sqrt{d_{\pi}}\pi_{i,j}; 1 \leq i, j \leq d_{\pi}, \pi \in \G\}$ is a complete orthonormal basis for $L^{2}(G)$.

Let $L^{2}_{lK}(G), L^{2}_{rK}(G)$ and $L^{2}_{bK}(G): = L^{2}_{lK}(G)\cap L^{2}_{rK}(G)$ be the subspaces of $L^{2}(G)$ comprising functions that are almost--everywhere $K$--left--invariant, $K$--right--invariant, and $K$--bi--invariant (respectively). The orthogonal projections from $L^{2}(G)$ onto these spaces will be denoted, respectively $P^{K}, P_{K}$ and $Q_{K} = P^{K}P_{K} = P_{K}P^{K}$, so that for all $f \in L^{2}(G), g \in G$,
$$ P^{K}f(g) = \int_{K}f(kg)dk, P_{K}f(g) = \int_{K}f(gk)dk, Q_{K}f(g) = \int_{K}\int_{K}f(kgk^{\prime})dkdk^{\prime}.$$
We can and will use natural isomorphisms between these spaces to identify $L^{2}_{lk}(G)$ with $L^{2}(X^{\prime}, \sigma^{\prime}),L^{2}_{rk}(G)$ with $L^{2}(X, \sigma)$ and $L^{2}_{bk}(G)$ with $L^{2}(E, \alpha)$. This last space will often just be written as $L^{2}(K \backslash G /K)$, in line with standard usage.

Now recall that a representation $\pi$ of $G$ is said to be spherical if there exists a non--zero spherical vector $u_{\pi} \in V_{\pi}$, i.e. $\pi(k)u_{\pi} = u_{\pi}$ for all $k \in K$. If this is the case, then $u_{\pi}$ is unique up to scalar multiplication, and we define $V_{\pi}^{K}: = \{\lambda u_{\pi}; \lambda \in \C\}$. We find it convenient to define $V_{\pi}^{K} = \{0\}$ if $\pi$ is not spherical. In either case, let $E_{\pi}^{K} = \int_{K}\pi(k)dk$ be the orthogonal projection from $V_{\pi}$ to $V_{\pi}^{K}$. When $\pi$ is spherical, we will, for convenience, assume that $u_{\pi}$ has norm one, and we choose an orthonormal basis $\{e_{1}^{\pi}, \ldots, e_{d_{\pi}}^{\pi}\}$ in $V_{\pi}$, with $e_{1}^{\pi} = u_{\pi}$. Let $\G_{s}$ be the subset of $\G$ comprising spherical representations.

Now let $f^{\pi}_{u, v}: = \la \pi(\cdot)u, v \ra$, where $u, v \in V_{\pi}, \pi \in \G$. Then $f^{\pi}_{u,v} \in C(G)$, and easy algebra yields
\begin{equation} \label{uses}
P^{K}f^{\pi}_{u,v} =  f^{\pi}_{u, E_{\pi}^{K}v}, P_{K}f^{\pi}_{u,v} =  f^{\pi}_{E_{\pi}^{K}u, v}, Q_{K}f^{\pi}_{u,v} = f^{\pi}_{E_{\pi}^{K}u, E_{\pi}^{K}v}.
\end{equation}

We have the following consequences of the Peter--Weyl theorem:

\begin{prop} \label{PW}
\begin{enumerate}
\item $\{\sqrt{d_{\pi}} \la \pi(\cdot)e_{i}^{\pi}, e_{1}^{\pi} \ra; i = 1, \ldots, d_{\pi}, \pi \in \G_{s}\}$ is a complete orthonormal basis for $L^{2}_{lK}(G)$.
\item $\{\sqrt{d_{\pi}} \la \pi(\cdot)e_{1}^{\pi}, e_{j}^{\pi} \ra; j = 1, \ldots, d_{\pi},\pi \in \G_{s}\}$ is a complete orthonormal basis for $L^{2}_{rK}(G)$.
\item $\{\sqrt{d_{\pi}} \la \pi(\cdot)e_{1}^{\pi}, e_{1}^{\pi} \ra; \pi \in \G_{s}\}$ is a complete orthonormal basis for $L^{2}_{bK}(G)$.
\end{enumerate}
\end{prop}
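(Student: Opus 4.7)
My plan is to obtain each of the three orthonormal bases as the collection of those Peter--Weyl basis vectors that survive the appropriate orthogonal projection ($P^{K}$, $P_{K}$, or $Q_{K}$). The Peter--Weyl theorem supplies the complete orthonormal basis $\{\sqrt{d_{\pi}}\, f^{\pi}_{e^{\pi}_{j}, e^{\pi}_{i}} : 1\le i,j \le d_{\pi},\ \pi \in \G\}$ of $L^{2}(G)$, and the three subspaces $L^{2}_{lK}(G)$, $L^{2}_{rK}(G)$, $L^{2}_{bK}(G)$ are precisely the ranges of $P^{K}$, $P_{K}$, $Q_{K}$ respectively.

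First I would record a simple Hilbert-space fact: if $P$ is an orthogonal projection on a Hilbert space $H$ and $\{v_{n}\}_{n\in I}$ is an orthonormal basis of $H$ with the property that $P v_{n} \in \{0, v_{n}\}$ for every $n$, then $\{v_{n} : P v_{n} = v_{n}\}$ is an orthonormal basis of $\Ran\, P$. Indeed, writing $S = \{n : Pv_{n} = v_{n}\}$ and $M = \overline{\mathrm{span}}\{v_{n} : n \in S\}$, we have $M \subseteq \Ran\, P$, while $\{v_{n} : n \notin S\}$ spans $M^{\perp}$ and lies in $\Ker P = (\Ran\, P)^{\perp}$; hence $\Ran\, P \subseteq M$, and equality follows.

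Next, I would apply the identities (\ref{uses}) together with the fact that $E^{K}_{\pi}$ is the orthogonal projection onto $V^{K}_{\pi} = \C e^{\pi}_{1}$ when $\pi \in \G_{s}$, so that $E^{K}_{\pi} e^{\pi}_{i} = \delta_{i,1} e^{\pi}_{1}$; for $\pi \notin \G_{s}$ one has $E^{K}_{\pi} = 0$. Substituting into (\ref{uses}) yields, for $\pi \in \G_{s}$,
\begin{align*}
P^{K} f^{\pi}_{e^{\pi}_{j}, e^{\pi}_{i}} &= \delta_{i,1}\, f^{\pi}_{e^{\pi}_{j}, e^{\pi}_{1}},\\
P_{K} f^{\pi}_{e^{\pi}_{j}, e^{\pi}_{i}} &= \delta_{j,1}\, f^{\pi}_{e^{\pi}_{1}, e^{\pi}_{i}},\\
Q_{K} f^{\pi}_{e^{\pi}_{j}, e^{\pi}_{i}} &= \delta_{i,1}\delta_{j,1}\, f^{\pi}_{e^{\pi}_{1}, e^{\pi}_{1}},
\end{align*}
while every Peter--Weyl basis vector with $\pi \notin \G_{s}$ is annihilated. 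In all three cases each Peter--Weyl basis vector is either killed or fixed by the relevant projection, so the Hilbert-space observation above yields the three claimed bases after a cosmetic relabeling of the summation index in parts (1) and (2). The only step that deserves attention is the ``$0$-or-$1$'' structure of the projections on the Peter--Weyl basis, which in turn rests on the identities (\ref{uses}) and the explicit form of $E^{K}_{\pi}$; no deeper obstacle is present.
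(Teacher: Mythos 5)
Your argument is correct and is essentially the paper's own (very terse) proof: the paper likewise deduces all three statements from the Peter--Weyl theorem together with the identities (\ref{uses}), i.e.\ from the fact that $E^{K}_{\pi}$ is the rank-one projection onto $\C e^{\pi}_{1}$ for $\pi \in \G_{s}$ and vanishes otherwise, so each Peter--Weyl basis vector is either fixed or annihilated by $P^{K}$, $P_{K}$, $Q_{K}$. The Hilbert-space lemma you record is exactly the routine step the paper leaves implicit.
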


\begin{proof} This follows easily from the Peter--Weyl theorem and (\ref{uses}). Note that at least (3) is well--known (see e.g. \cite{Wol} Proposition 9.10.4, p.205 and \cite{Hel2}, Theorem 3.5, pp.533--4.)
\end{proof}

In relation to Proposition \ref{PW}(3), observe that the prescription $$\phi_{\pi}(g): =  \la e_{1}^{\pi}, \pi(g) e_{1}^{\pi} \ra,$$ for $g \in G$ defines a (positive--definite) spherical function on $G$, i.e. a non--trivial continuous function from $G$ to $\C$ so that for all $g, h \in G$,

\begin{equation} \label{spher}
\int_{K}\phi_{\pi}(gkh)dk = \phi_{\pi}(g)\phi_{\pi}(h),
\end{equation}

\noindent and all spherical functions on $G$ arise in this way (see \cite{Hel2} pp.414--7 or \cite{Wol} pp.204--5). Since the conjugate representation to $\pi$ is both irreducible and spherical whenever $\pi$ is, we can rewrite the result stated in the more familiar form that
$\{\sqrt{d_{\pi}}\phi_{\pi}, \pi \in \G_{s}\}$ is a complete orthonormal basis for $L^{2}_{bK}(G)$.

If $\mu \in {\mathcal M}_{F}(G)$, its Fourier transform is the matrix--valued function
$$ \widehat{\mu}(\pi) = \int_{G}\pi(g^{-1})\mu(dg),$$ where $\pi \in \widehat{G}$.
Properties of the Fourier transform are developed in section 4.2 of \cite{App1}. In particular, $\widehat{\mu}$ uniquely determines the measure $\mu$.

If $\mu$ is $K$-bi--invariant, its {\it spherical transform} is the complex-valued mapping:
$$ \widehat{\mu}(\phi) = \int_{G}\phi(g)\mu(dg),$$
where $\phi$ is a spherical function on $G$, and this also uniquely determines $\mu$ (see e.g. \cite{Hey1}).

In \cite{App2} (see also Theorem 4.5.1 in \cite{App1}), it is shown that $\mu \in {\mathcal M}_{F}(G)$ has a square--integrable density if and only if

\begin{equation} \label{Apref}
\sum_{\pi \in\ G}d_{\pi}||\widehat{\mu}(\pi)||_{HS}^{2} = \sum_{\pi \in \G}d_{\pi}\sum_{i,j = 1}^{d_{\pi}}|\widehat{\mu}(\pi)_{ij}|^{2} < \infty,
\end{equation}
where $|| \cdot ||_{HS}$ denotes the matrix Hilbert--Schmidt norm, so that $||\widehat{\mu}(\pi)||_{HS}^{2}: = \tr(\widehat{\mu}(\pi)^{*}\widehat{\mu}(\pi))$. Furthermore, if (\ref{Apref}) holds, then the density $f_{\mu}:= d\mu/dg$ has the $L^{2}$--Fourier expansion:
\begin{equation} \label{dens1}
f_{\mu} = \sum_{\pi \in \G}d_{\pi}\tr(\widehat{\mu}(\pi)\pi).
\end{equation}

 We will need the following useful characterisations of $K$--invariant measures by means of their Fourier transforms, in relation to which it's worth noting that the mapping $\mu \rightarrow \mu^{\prime}$ is a bijection between $K$--left--invariant and $K$--right--invariant measures on $G$. % where $\mu^{\prime}(A): = \mu(A^{-1})$ for all $A \in {\mathcal B}(G)$.

\begin{prop} \label{FT1} Let $\mu \in {\mathcal M}_{F}(G)$.
\begin{enumerate}
\item The following are equivalent:
\begin{enumerate}
\item The measure $\mu$ is $K$--left--invariant,
\item $\widehat{\mu}(\pi)E^{K}_{\pi} = \left\{ \begin{array}{c c} \widehat{\mu}(\pi)&~\mbox{for all}~ \pi \in \G_{s}\\
                                                                  0 &~\mbox{for all}~ \pi \notin \G_{s}, \end{array} \right.$
\item $\widehat{\mu}(\pi)_{ij} = 0$ for all $\pi \notin \G_{s}$, or $\pi \in \G_{s}$ and $i \neq 1$.
\end{enumerate}
\item

\begin{enumerate}
\item The measure $\mu$ is $K$--right--invariant,
\item $E^{K}_{\pi}\widehat{\mu}(\pi)  = \left\{ \begin{array}{c c} \widehat{\mu}(\pi)&~\mbox{for all}~ \pi \in \G_{s}\\
                                                                  0 &~\mbox{for all}~ \pi \notin \G_{s}, \end{array} \right.$
\item $\widehat{\mu}(\pi)_{ij} = 0$ for all $\pi \notin \G_{s}$, or $\pi \in \G_{s}$ and $j \neq 1$.
\end{enumerate}
\item
\begin{enumerate}
\item The measure $\mu$ is $K$--bi--invariant,
\item $E^{K}_{\pi}\widehat{\mu}(\pi)E^{K}_{\pi} = \left\{ \begin{array}{c c} \widehat{\mu}(\pi)&~\mbox{for all}~ \pi \in \G_{s}\\
                                                                  0 &~\mbox{for all}~ \pi \notin \G_{s}, \end{array} \right.$
\item $\widehat{\mu}(\pi)_{ij} = 0$ for all $\pi \notin \G_{s}$, or $\pi \in \G_{s}$ and $i,j \neq 1$.
\end{enumerate}

\end{enumerate}
\end{prop}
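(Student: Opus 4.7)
The plan is to reduce each invariance property of $\mu$ to a convolution identity and then apply the Fourier transform. Let $m_{K}$ denote the normalised Haar measure on $K$, considered as a probability measure on $G$ supported on $K$. The first step is to show that $\mu$ is $K$--left--invariant if and only if $m_{K} * \mu = \mu$: the forward direction follows by averaging $\delta_{k} * \mu = \mu$ over $k \in K$, while the converse uses $K$--left--invariance of $m_{K}$ to write $\delta_{k} * \mu = \delta_{k} * m_{K} * \mu = m_{K} * \mu = \mu$. Analogously, $\mu$ is $K$--right--invariant if and only if $\mu * m_{K} = \mu$, and $K$--bi--invariant if and only if both identities hold.

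Next I would apply the Fourier transform. From the definition $\widehat{\mu}(\pi) = \int_{G}\pi(g^{-1})\,\mu(dg)$ and a Fubini argument, one checks that $\widehat{\mu_{1} * \mu_{2}}(\pi) = \widehat{\mu_{2}}(\pi)\widehat{\mu_{1}}(\pi)$. The convolution identities above then translate to
\begin{equation*}
\widehat{\mu}(\pi)\widehat{m_{K}}(\pi) = \widehat{\mu}(\pi) \quad \text{and} \quad \widehat{m_{K}}(\pi)\widehat{\mu}(\pi) = \widehat{\mu}(\pi),
\end{equation*}
respectively (and to both for bi--invariance). A direct computation, using invariance of $m_{K}$ under inversion on $K$, gives $\widehat{m_{K}}(\pi) = \int_{K}\pi(k^{-1})\,dk = \int_{K}\pi(k)\,dk$. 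This operator is self--adjoint (from $\pi(k)^{*} = \pi(k^{-1})$ together with Haar invariance), idempotent (by Fubini and Haar invariance), and fixes $V_{\pi}^{K}$ pointwise while annihilating its orthogonal complement---so it coincides with the orthogonal projection $E_{\pi}^{K}$. This yields (a)$\Leftrightarrow$(b); note in particular that $E_{\pi}^{K} = 0$ for $\pi \notin \G_{s}$, so (b) forces $\widehat{\mu}(\pi) = 0$ in that case.

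For (b)$\Leftrightarrow$(c), I would pass to matrix entries. When $\pi \in \G_{s}$, the projection $E_{\pi}^{K}$ has rank one with image $\C e_{1}^{\pi}$, so in the chosen basis its matrix has a single non-zero entry (equal to $1$) at position $(1,1)$. A direct computation of the product $\widehat{\mu}(\pi) E_{\pi}^{K}$ (respectively $E_{\pi}^{K}\widehat{\mu}(\pi)$) then shows that all entries outside of one distinguished row (respectively column) of $\widehat{\mu}(\pi)$ must vanish, yielding precisely the entry--wise conditions stated in (c) for parts 1 and 2. Part 3 follows by combining parts 1 and 2, since bi--invariance is the conjunction of left-- and right--invariance. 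The main (and mild) obstacle is the identification $\widehat{m_{K}}(\pi) = E_{\pi}^{K}$; once this is in hand, everything else is routine bookkeeping in the basis distinguished by the spherical vector $u_{\pi} = e_{1}^{\pi}$.
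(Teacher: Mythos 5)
Your proof is correct, but it is organised differently from the paper's. The paper proves (a)$\Rightarrow$(b) by averaging matrix coefficients directly: for $K$--right--invariant $\mu$ it writes $\la \widehat{\mu}(\pi)u,v\ra=\int_G P^{K}f^{\pi}_{u,v}(g^{-1})\mu(dg)=\la\widehat{\mu}(\pi)u,E^{K}_{\pi}v\ra$ using the identity $P^{K}f^{\pi}_{u,v}=f^{\pi}_{u,E^{K}_{\pi}v}$, leaves (b)$\Leftrightarrow$(c) as linear algebra, and gets (c)$\Rightarrow$(a) from Peter--Weyl completeness (the measure is determined by its integrals against matrix coefficients, and the surviving ones are exactly the $K$--right--invariant functions). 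You instead encode the invariance as an idempotent convolution identity, $m_{K}*\mu=\mu$ (resp.\ $\mu*m_{K}=\mu$), and then use the multiplicativity of the Fourier transform together with the identification $\widehat{m_{K}}(\pi)=E^{K}_{\pi}$ (which is immediate from the paper's definition $E^{K}_{\pi}=\int_{K}\pi(k)dk$ and unimodularity of $K$). Your route is slicker and treats both implications symmetrically in one stroke, and you correctly track the order reversal $\widehat{\mu_{1}*\mu_{2}}(\pi)=\widehat{\mu_{2}}(\pi)\widehat{\mu_{1}}(\pi)$, which is exactly what matches left--invariance with $\widehat{\mu}(\pi)E^{K}_{\pi}=\widehat{\mu}(\pi)$ and right--invariance with $E^{K}_{\pi}\widehat{\mu}(\pi)=\widehat{\mu}(\pi)$; you also handle the non--spherical case correctly via $E^{K}_{\pi}=0$. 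The one step you should make explicit is the direction (b)$\Rightarrow$(a): passing from $\widehat{m_{K}*\mu}=\widehat{\mu}$ back to $m_{K}*\mu=\mu$ requires injectivity of the Fourier transform on ${\mathcal M}_{F}(G)$ (stated in the paper, Section 3, citing \cite{App1}); this plays the role that the Peter--Weyl completeness argument plays in the paper's (c)$\Rightarrow$(a) step, so it is a legitimate but currently implicit ingredient hidden in your word ``translate''. With that made explicit, the argument is complete; the remaining row/column bookkeeping in (c) is consistent with the paper's convention $\pi_{ij}(g)=\la\pi(g)e_{i}^{\pi},e_{j}^{\pi}\ra$.
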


\begin{proof} We just prove (2) as (1) is similar and (3) follows from these two assertions. The equivalence of (b) and (c) is straightforward linear algebra and is left to the reader. To show that (a) implies (b),
let $\pi \in \G$ and $u, v \in V_{\pi}$. If $\mu$ is $K$--right--invariant, then
\bean \la \widehat{\mu}(\pi)u, v \ra & = & \int_{G}f^{\pi}_{u,v}(g^{-1})\mu(dg) \\
& = &  \int_{G}P^{K}f^{\pi}_{u,v}(g^{-1})\mu(dg)\\
& = & \int_{G}f^{\pi}_{u, E^{K}_{\pi}v}(g^{-1})\mu(dg)\\
& = &  \la \widehat{\mu}(\pi)u, E^{K}_{\pi}v \ra\eean
and the result follows.

 To show that (c) implies (a), if $\widehat{\mu}(\pi)_{ij} = 0$ for all $\pi \notin \G_{s}$ or $\pi \in \G_{s}$ and $j \neq 1$, then $\int_{G} \la \pi(g)e_{j}, e_{i} \ra \mu(dg) = 0$. So by Proposition \ref{PW}(2), $\int_{G}f_{u,v}^{\pi}(g)\mu(dg) = 0$ unless the function $f_{u,v}^{\pi}$ is $K$--right--invariant. Hence, by the Peter--Weyl theorem for continuous functions (see e.g. Theorem 2.2.4 in \cite{App1}, p.33), $\mu$ is determined by its integrals against functions in $C_{rc}(G)$, and so it is $K$--right--invariant. \end{proof}

When we combine the main result of \cite{App2} (see also Theorem 4.5.1 in \cite{App1}) with that of Proposition \ref{FT1} we get

\begin{theorem} \label{L2} Let $\mu \in {\mathcal M}_{F}(G)$.
\begin{enumerate}
\item If $\mu$ is $K$--left--invariant, then it has an $L^{2}$--density if and only if
$$ \sum_{\pi \in \G_{s}}d_{\pi}||\widehat{\mu}(\pi)||^{2}_{HS} = \sum_{\pi \in \G_{s}}d_{\pi}\sum_{j= 1}^{d_{\pi}}|\widehat{\mu}(\pi)_{1j}|^{2}  < \infty.$$

\item If $\mu$ is $K$--right--invariant, then it has an $L^{2}$--density if and only if
$$ \sum_{\pi \in \G_{s}}d_{\pi}||\widehat{\mu}(\pi)||^{2}_{HS} = \sum_{\pi \in \G_{s}}d_{\pi}\sum_{i = 1}^{d_{\pi}}|\widehat{\mu}(\pi)_{i1}|^{2} < \infty.$$

\item If $\mu$ is $K$--bi--invariant, then it has an $L^{2}$--density if and only if
$$ \sum_{\pi \in \G_{s}}d_{\pi}|\widehat{\mu}(\phi_{\pi})|^{2} < \infty.$$

\end{enumerate}
\end{theorem}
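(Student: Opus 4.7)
The strategy is a direct combination of the two main ingredients already assembled in the paper: the criterion \eqref{Apref}, which says that an arbitrary $\mu \in {\mathcal M}_{F}(G)$ has a square-integrable density if and only if $\sum_{\pi \in \G} d_{\pi} \|\widehat{\mu}(\pi)\|_{HS}^{2} < \infty$, together with the vanishing conditions on the matrix entries $\widehat{\mu}(\pi)_{ij}$ supplied by Proposition \ref{FT1}. The plan is to substitute the vanishing patterns into \eqref{Apref} in each of the three invariance cases, and then (for part (3)) identify the surviving entry with the spherical transform.

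For part (1), if $\mu$ is $K$-left-invariant, Proposition \ref{FT1}(1)(c) gives $\widehat{\mu}(\pi)_{ij} = 0$ whenever $\pi \notin \G_{s}$, and also whenever $\pi \in \G_{s}$ but $i \neq 1$. Expanding the Hilbert--Schmidt norm entrywise in the basis $\{e_{1}^{\pi},\ldots,e_{d_{\pi}}^{\pi}\}$ and using these relations, the sum in \eqref{Apref} collapses to
\[
\sum_{\pi \in \G} d_{\pi}\|\widehat{\mu}(\pi)\|_{HS}^{2} = \sum_{\pi \in \G_{s}} d_{\pi}\sum_{j=1}^{d_{\pi}} |\widehat{\mu}(\pi)_{1j}|^{2},
\]
which is exactly the stated criterion. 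Part (2) follows by the symmetric argument, using Proposition \ref{FT1}(2)(c) so that only the first column of $\widehat{\mu}(\pi)$ survives.

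For part (3), $K$-bi-invariance together with Proposition \ref{FT1}(3)(c) leaves only the single entry $\widehat{\mu}(\pi)_{11}$ nonzero for each $\pi \in \G_{s}$, so \eqref{Apref} reduces to $\sum_{\pi \in \G_{s}} d_{\pi}|\widehat{\mu}(\pi)_{11}|^{2}$. It remains to identify this entry with the spherical transform $\widehat{\mu}(\phi_{\pi})$. Since $\phi_{\pi}(g) = \la e_{1}^{\pi}, \pi(g) e_{1}^{\pi} \ra$, and using unitarity of $\pi$, we compute
\[
\widehat{\mu}(\pi)_{11} = \la \widehat{\mu}(\pi) e_{1}^{\pi}, e_{1}^{\pi}\ra = \int_{G} \la \pi(g^{-1})e_{1}^{\pi}, e_{1}^{\pi}\ra \mu(dg) = \int_{G}\phi_{\pi}(g)\mu(dg) = \widehat{\mu}(\phi_{\pi}),
\]
and the stated condition $\sum_{\pi \in \G_{s}} d_{\pi}|\widehat{\mu}(\phi_{\pi})|^{2} < \infty$ drops out.

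There is no real obstacle here; the argument is essentially bookkeeping once one has \eqref{Apref} and Proposition \ref{FT1}. The only point that requires a small computation is the identification in (3) of the $(1,1)$-entry of $\widehat{\mu}(\pi)$ with the spherical transform evaluated at $\phi_{\pi}$, which is a one-line check using the definition of $\phi_{\pi}$ and the convention $\widehat{\mu}(\pi) = \int_{G}\pi(g^{-1})\mu(dg)$.
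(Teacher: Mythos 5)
Your proof is correct and follows essentially the same route as the paper: both directions come from substituting the vanishing pattern of Proposition \ref{FT1} into the criterion (\ref{Apref}), so that the full sum over $\G$ collapses to the sum over $\G_{s}$ of the surviving row, column, or single entry. Your explicit check in part (3) that $\widehat{\mu}(\pi)_{11} = \widehat{\mu}(\phi_{\pi})$ is a small piece of bookkeeping the paper leaves implicit, but it does not change the argument.
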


\begin{proof} We just prove (1) as the others are similar. If $\mu$ has an $L^{2}$--density, then the result follows from (\ref{Apref}) and Proposition \ref{FT1}(1). For the converse direction, note that by $K$--left--invariance of $\mu$ and Proposition \ref{FT1}(1), we have
$$ \sum_{\pi \in \G}d_{\pi}\sum_{i,j = 1}^{d_{\pi}}|\widehat{\mu}(\pi)_{ij}|^{2} = \sum_{\pi \in \G_{s}}d_{\pi}\sum_{j= 1}^{d_{\pi}}|\widehat{\mu}(\pi)_{1j}|^{2}  < \infty,$$
and then the result again follows by (\ref{Apref}). \end{proof}

In all three cases, the Fourier expansion of the density is given by (\ref{dens1}). Any (left, right or bi)--$K$--invariance of the measure is inherited by the density (almost everywhere). This is a consequence of Proposition \ref{RNri} and its generalisation to the $K$--left--invariant case; it can also be deduced by uniqueness of Fourier transforms, using Proposition \ref{FT1}. %by showing that (e.g. in the $K$--left--invariant case), $P^{K}f_{\mu} = f_{\mu}$.
If $\mu$ is $K$--bi--invariant, it is easy to check that (in the $L^{2}$--sense), for all $\pi \in \G_{s}$,
\begin{equation} \label{dens2}
f_{\mu} = \sum_{\pi \in \G_{s}}d_{\pi}\overline{\widehat{\mu}(\phi_{\pi})}\phi_{\pi}.
\end{equation}

{\bf Remarks.} \begin{enumerate} \item  The advantage of these results over (\ref{Apref}) is the reduction in summing over the whole of $\G$ to summing over the subset $\G_{s}$, and then summing over a smaller number of matrix elements; in (3) there is the additional advantage of having a single matrix element.

\item The results of this section generalise beyond the category of Gelfand pairs, to arbitrary $(G, K)$, where $G$ is compact and $K$ is closed; but the set--up presented here is convenient for the sequel.
\end{enumerate}
%\vspace{5pt}

%{\bf Note.} All of the results of this section hold in the case where $G$ is a general compact group. But in that case, we only interpret $\phi_{\pi}$ as a spherical function if $(G, K)$ is a Gelfand pair.

\section{$K$--Invariant Densities and Kernels for Convolution Semigroups}

Let $(\mu_{t}, t \geq 0)$ be a convolution semigroup of probability measures on the compact group $G$. By this we mean that
\begin{itemize}
\item $\mu_{s+t} = \mu_{s} * \mu_{t}$ for all $s, t \geq 0$,
\item $\mbox{weak-lim}_{t \rightarrow 0}\mu_{t} = \mu_{0}$.
\end{itemize}
It then follows that $\mu_{0}$ is Haar measure on a compact subgroup of $G$ (see Theorem 4.6.1 in \cite{App1}, p.104). We say that the convolution semigroup is {\it standard} if $\mu_{0} = \delta_{e}$. In this case,
(see e.g. Proposition 5.1.2 in \cite{App1} and the discussion that follows) for each $\pi \in \G, (\widehat{\mu_{t}}(\pi), t \geq 0)$ is a strongly continuous one--parameter contraction semigroup on $V_{\pi}$. Furthermore $(P_{t}, t \geq 0)$ is a contraction semigroup of linear operators on $L^{2}(G)$ defined for each $t \geq 0, f \in L^{2}(G), g \in G$ by
$$ P_{t}f(g) = \int_{G}f(gh)\mu_{t}(dh),$$
and we have $\widehat{\mu_{t}}(\pi)_{ij} = P_{t}\overline{\pi_{ji}}(e)$ for each $\pi \in \G, 1 \leq i,j \leq d_{\pi}$.

It is shown in \cite{L1} that a convolution semigroup is $K$--left--invariant if and only if it is $K$--right--invariant if and only if it is $K$--bi--invariant. Then $\mu_{0} = m_{K}$, $(P_{t}, t \geq 0)$ as defined above\footnote{It also acts as contractions on $L^{2}(G)$, and is a semigroup in the sense that $P_{s+t} = P_{s}P_{t}$ for all $s, t \geq 0$, but $P_{0} = P_{K}$ in this case.} is a contraction semigroup on $L^{2}(K \backslash G /K)$, and for each $\pi \in \G_{s}, (\widehat{\mu_{t}}(\phi_{\pi}), t \geq 0)$ is a strongly continuous one--parameter contraction semigroup of complex numbers.

For each $\mu \in {\mathcal M}(G)$, we have $\mu^{(K)}:= \mu \circ Q_{K}^{-1} \in {\mathcal M}(K \backslash G /K)$ and we note that for all $f \in C_{c}(G)$,
\begin{equation} \label{Bib}
\int_{G}f(g)\mu^{(K)}(dg) = \int_{G}(Q_{K}f)(g)\mu(dg).
\end{equation}

\begin{theorem} \label{inher} Let $\mu \in {\mathcal M}_{F}(G)$. \begin{enumerate} \item If $\mu$ has a square--integrable or continuous density then so does $\mu^{(K)}$.
 \item If $G$ is a connected Lie group having Lie algebra $\g$ and $\mu$ has a $C^{p}$--density for $p \in \N$, then so does $\mu^{(K)}$.
\end{enumerate}
In all cases, if $f$ is the density of $\mu$, then that of $\mu^{(K)}$ is $Q_{K}f$.
\end{theorem}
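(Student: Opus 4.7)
The strategy is to first establish the last assertion — that $f_{\mu^{(K)}} = Q_K f$ whenever $f$ is a density of $\mu$ — and then to read off (1) and (2) as simple regularity statements about the averaging operator $Q_K$.

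For the identification, I would start from (\ref{Bib}): for every $h \in C(G)$,
\[
\int_G h(g)\, \mu^{(K)}(dg) \;=\; \int_G (Q_K h)(g)\, f(g)\, dg \;=\; \int_K\!\!\int_K\!\!\int_G h(kgk')\, f(g)\, dg\, dk\, dk'.
\]
In the innermost integral I would change variables via $g \mapsto k^{-1}g k'^{-1}$. Left-invariance of $dg$ handles the $k$, and $\Delta_G(k') = 1$ (since $K$ is compact) handles the $k'$, so this substitution preserves $dg$. A further Fubini, together with invariance of Haar measure on $K$ under inversion, then rewrites the right-hand side as $\int_G h(g')(Q_K f)(g')\, dg'$. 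Since $h \in C(G)$ is arbitrary, this identifies $Q_K f$ as the Radon--Nikodym derivative of $\mu^{(K)}$ with respect to $m_G$.

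Part (1) is now immediate from well-known properties of $Q_K$. Since $Q_K : L^2(G) \to L^2_{bK}(G)$ is an orthogonal projection, $\|Q_K f\|_2 \le \|f\|_2$, and square-integrability descends. For continuity, note that $f \in C(G)$ is uniformly continuous on the compact group $G$, whence $(g,k,k') \mapsto f(kgk')$ is uniformly continuous on $G \times K \times K$; dominated convergence then shows that $Q_K f$ is continuous.

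For part (2), I would differentiate under the double $K$-integral. For a left-invariant vector field $X \in \g$ and $k, k' \in K$, the identity $g\exp(tX)k' = gk'\exp\bigl(t\,\Ad(k'^{-1})X\bigr)$ gives
\[
\frac{d}{dt}\bigg|_{t=0} f\bigl(kg\exp(tX)k'\bigr) \;=\; (Y_{k'} f)(kgk'),
\]
where $Y_{k'} := \Ad(k'^{-1})X$ is again a left-invariant vector field. Because $k' \mapsto Y_{k'}$ is continuous on the compact set $K$, this expression is jointly continuous in $(g,k,k')$, which legitimises the interchange of $X$ and the double $K$-integral. Iterating the same argument $p$ times with an arbitrary ordered $p$-tuple of left-invariant vector fields shows that $Q_K f \in C^p(G)$ whenever $f \in C^p(G)$, proving (2).

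The only points requiring any care are the change of variables in the identification step, which depends crucially on compactness of $K$ via triviality of $\Delta_G|_K$, and, in part (2), the fact that pushing a left-invariant vector field through a right translation by $k'$ twists it by $\Ad(k'^{-1})$; compactness of $K$ keeps the twisted coefficients bounded, so differentiation under the integral is legitimate throughout, and no real obstacle arises.
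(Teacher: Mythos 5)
Your proof is correct and follows essentially the same route as the paper: part (1) via the fact that $Q_{K}$ is an orthogonal projection onto $L^{2}(K \backslash G/K)$ that preserves continuity, and part (2) via differentiation under the double $K$--integral, where the paper concludes by citing Sugiura's theorem while you argue directly that continuity of all iterated left--invariant derivatives up to order $p$ yields a $C^{p}$ function. Your two added details are worthwhile but do not change the approach: you verify explicitly (by the change of variables $g \mapsto k^{-1}gk'^{-1}$ and self--adjointness of the averaging) that the density of $\mu^{(K)}$ is $Q_{K}f$, which the paper asserts without proof, and you record the $\mathrm{Ad}(k'^{-1})$ twist that is hidden in the paper's displayed identity $X_{1}\cdots X_{p}Q_{K}f = Q_{K}(X_{1}\cdots X_{p}f)$, which is only needed (and only valid) in the twisted form you give, though this does not affect the continuity conclusion.
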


\begin{proof} \begin{enumerate} \item Both follow easily since $Q_{K}$ is an orthogonal projection from $L^{2}(G)$ to $L^{2}(K \backslash G /K)$ which preserves continuity.
\item For all $X_{1}, \ldots, X_{p}\in \g, g \in G$ the mapping $g \rightarrow X_{1} \ldots X_{p}Q_{K}f(g)$ is well defined and continuous, indeed standard arguments yield
$$ X_{1} \ldots X_{p}Q_{K}f(g) = \int_{K}\int_{K} X_{1} \ldots X_{p}f(kgk^{\prime})dkdk^{\prime} = Q_{K}(X_{1} \ldots X_{p}f)(g),$$
and the result follows by a theorem of Sugiura \cite{Sug} pp. 42--3 (see also Theorem 1.3.5 on p.20 of \cite{App1}). \end{enumerate} \end{proof}

Now suppose that $(\mu_{t}, t \geq 0)$ is a standard convolution semigroup, and consider the associated set of $K$--bi--invariant probability measures $(\mu_{t}^{(K)}, t \geq 0$).

\begin{prop} \label{inher1} $(\mu_{t}^{(K)}, t \geq 0)$ is a $K$--bi--invariant convolution semigroup on $G$.
\end{prop}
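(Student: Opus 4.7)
The plan is to verify the three defining properties of a convolution semigroup together with $K$--bi--invariance: (i) each $\mu_{t}^{(K)}$ is a $K$--bi--invariant probability measure; (ii) $\mu_{s}^{(K)} * \mu_{t}^{(K)} = \mu_{s+t}^{(K)}$ for all $s,t \geq 0$; and (iii) $\mu_{t}^{(K)} \to \mu_{0}^{(K)}$ weakly as $t \to 0$.

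Property (i) is a direct consequence of the defining relation (\ref{Bib}): for $k \in K$ and $f \in C(G)$, left--invariance of $m_{K}$ gives $Q_{K}(f \circ l_{k}) = Q_{K}f$, whence $\int (f \circ l_{k})\,d\mu_{t}^{(K)} = \int Q_{K}f\,d\mu_{t} = \int f\,d\mu_{t}^{(K)}$, with the analogous calculation for right translations. Normalisation is immediate since $Q_{K}\mathbf{1}_{G} = \mathbf{1}_{G}$.

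For (iii), first note that $\mu_{0} = \delta_{e}$ forces $\mu_{0}^{(K)} = m_{K}$, since (\ref{Bib}) gives $\int f\,d\mu_{0}^{(K)} = Q_{K}f(e) = \int f\,dm_{K}$. The function $Q_{K}f$ is continuous whenever $f \in C(G)$ (by Fubini and continuity of the $K$--action on $C(G)$), so weak continuity of $(\mu_{t})$ at $0$ yields $\int f\,d\mu_{t}^{(K)} = \int Q_{K}f\,d\mu_{t} \to \int Q_{K}f\,d\mu_{0} = \int f\,dm_{K}$ as $t \to 0$.

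The heart of the argument is (ii). Rewriting (\ref{Bib}) via Fubini as $\mu^{(K)} = m_{K} * \mu * m_{K}$, associativity of convolution together with $m_{K} * m_{K} = m_{K}$ give
$$\mu_{s}^{(K)} * \mu_{t}^{(K)} = m_{K} * \mu_{s} * m_{K} * \mu_{t} * m_{K}, \qquad \mu_{s+t}^{(K)} = m_{K} * \mu_{s} * \mu_{t} * m_{K}.$$
I would verify equality by passing to Fourier transforms: since $\widehat{m_{K}}(\pi) = E_{\pi}^{K}$ (by Haar inversion--invariance) and $\widehat{\alpha * \beta}(\pi) = \widehat{\beta}(\pi)\widehat{\alpha}(\pi)$, the semigroup identity reduces to
$$E_{\pi}^{K}\widehat{\mu_{t}}(\pi)E_{\pi}^{K}\widehat{\mu_{s}}(\pi)E_{\pi}^{K} = E_{\pi}^{K}\widehat{\mu_{t}}(\pi)\widehat{\mu_{s}}(\pi)E_{\pi}^{K}$$
for every $\pi \in \widehat{G}$. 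For $\pi \notin \widehat{G}_{s}$ both sides vanish by Proposition \ref{FT1}. For spherical $\pi$, $E_{\pi}^{K}$ is the rank--one projection onto $\mathbb{C}u_{\pi}$, so the identity collapses to a single scalar equation relating a $(1,1)$--entry to a product of $(1,1)$--entries in the spherical basis. The main obstacle is precisely this scalar reduction --- showing that the insertion of the middle $E_{\pi}^{K}$ is absorbed when sandwiched between the outer projections --- and it is here that the Gelfand pair structure (commutativity of $L^{1}(K \backslash G / K)$) and the spherical functional identity (\ref{spher}) must be deployed. Once the Fourier transforms agree on all of $\widehat{G}$, uniqueness of the Fourier transform on $\mathcal{M}_{F}(G)$ converts this back to the identity of measures, completing the proof.
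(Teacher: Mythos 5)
Your parts (i) and (iii) are correct and unproblematic: bi--invariance of each $\mu_{t}^{(K)}$, the identification $\mu_{0}^{(K)}=m_{K}$, and weak continuity at $0$ all follow directly from (\ref{Bib}), exactly as in the paper (which establishes these by the same elementary computations). One small repair: for $\pi\notin\G_{s}$ both sides of your matrix identity vanish simply because $E_{\pi}^{K}=\int_{K}\pi(k)\,dk=0$, not by Proposition \ref{FT1}, which concerns $K$--invariant measures, and $\mu_{s},\mu_{t}$ are not assumed to have any $K$--invariance.

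The genuine gap is in (ii), and it sits exactly where you locate ``the main obstacle''. You correctly reduce the semigroup law to $E_{\pi}^{K}\widehat{\mu_{t}}(\pi)E_{\pi}^{K}\widehat{\mu_{s}}(\pi)E_{\pi}^{K}=E_{\pi}^{K}\widehat{\mu_{t}}(\pi)\widehat{\mu_{s}}(\pi)E_{\pi}^{K}$ for all $\pi\in\G_{s}$, i.e.\ (since $\phi_{\pi}$ is $K$--bi--invariant, so that $\widehat{\mu^{(K)}}(\phi_{\pi})=\int_{G}\phi_{\pi}\,d\mu$) to the scalar statement $\int_{G}\int_{G}\phi_{\pi}(gh)\,\mu_{s}(dg)\,\mu_{t}(dh)=\widehat{\mu_{s}}(\phi_{\pi})\,\widehat{\mu_{t}}(\phi_{\pi})$, but you never prove it: you only assert that the Gelfand pair structure and (\ref{spher}) ``must be deployed''. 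Those ingredients cannot close the gap. Commutativity of $L^{1}(K\backslash G/K)$ and the functional equation (\ref{spher}) yield multiplicativity of the spherical transform under convolution only when the measures being convolved are themselves $K$--bi--invariant, which $\mu_{s}$ and $\mu_{t}$ are not; and by injectivity of the spherical transform on bi--invariant measures, the scalar identity you still owe is precisely equivalent to $\mu_{s}^{(K)}*\mu_{t}^{(K)}=\mu_{s+t}^{(K)}$, so at the decisive point your route is circular. Moreover the identity is not a formal consequence of the hypotheses you list: for the standard convolution semigroup $\mu_{t}=\delta_{\exp(tX)}$ it would read $\phi_{\pi}(\exp((s+t)X))=\phi_{\pi}(\exp(sX))\,\phi_{\pi}(\exp(tX))$, which fails already for $G=SO(3)$, $K=SO(2)$, where $\phi_{\pi}$ restricted to such a one--parameter subgroup is a Legendre polynomial in the cosine of the colatitude. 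So the absorption of the middle $E_{\pi}^{K}$ genuinely requires additional input on how $m_{K}$ interacts with the individual measures $\mu_{t}$ (it is immediate, for instance, when each $\mu_{t}$ is central or already $K$--bi--invariant, which covers the examples to which the proposition is applied later in the paper). Note also that the paper's own proof takes a different route from yours: it never passes to Fourier transforms, but manipulates $\int_{G}\int_{G}Q_{K}f(gh)\,\mu_{s}(dg)\,\mu_{t}(dh)$ directly, inserting auxiliary integrations over $K$ and changing variables using the bi--invariance of $m_{K}$; your reduction is useful precisely because it isolates, on the Fourier side, what any such manipulation must deliver --- but as it stands your argument does not deliver it.
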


\begin{proof} For all $f \in C(G), s, t \geq 0$, by (\ref{Bib}),
\bean \int_{G}f(g)\mu_{s +t}^{(K)}(dg) & =  & \int_{G}\int_{G}Q_{K}f(gh)\mu_{s}(dg)\mu_{t}(dh) \\
& = & \int_{G}\int_{G}\int_{K}\int_{K}f(kghk^{\prime})dkdk^{\prime}\mu_{s}(dg)\mu_{t}(dh)\\
& = & \int_{G}\int_{G}\int_{K}\int_{K}\int_{K}f(kgll^{-1}hk^{\prime})dldkdk^{\prime}\mu_{s}(dg)\mu_{t}(dh)\\
& = & \int_{G}\int_{G}\int_{K}\int_{K}\int_{K}\int_{K}f(kgll^{\prime}l^{-1}hk^{\prime})dl^{\prime}dldkdk^{\prime}\mu_{s}(dg)\mu_{t}(dh)\\
& = & \int_{G}\int_{G}\int_{K}\int_{K}\int_{K}\int_{K}f(kgll^{\prime}hk^{\prime})dl^{\prime}dldkdk^{\prime}\mu_{s}(dg)\mu_{t}(dh)\\
& = & \int_{G}f(g)(\mu_{s}^{(K)} * \mu_{t}^{(K)})(dg)
\eean

Here we have used the bi--invariance of Haar measure on $K$ to first make a change of variable $l \rightarrow ll^{\prime}$ and then $l^{\prime} \rightarrow l^{\prime}l$.
The fact that $\mu_{0}^{(K)} = m_{K}$ follows from

$$ \int_{G}f(g)\mu_{0}^{(K)}(dg) =  \int_{G}(Q_{K}f)(g)\mu_{0}(dg) = \int_{K}\int_{K}f(kl)dkdl = \int_{K}f(k)dk. $$
The weak continuity follows easily from (\ref{Bib}).

\end{proof}

Many explicit examples of convolution semigroups that we consider in the next section fall under the aegis of Theorem \ref{inher} and Proposition \ref{inher1}.

Recall that $\mu \in {\mathcal M}(G)$ is said to be {\it central} (or conjugate--invariant) if $\mu(gAg^{-1}) = \mu(A)$ for all $g \in G, A \in {\mathcal B}(G)$, and we let ${\mathcal M}^{F}_{c}(G)$ be the set of all finite central measures on $G$. It is shown in Theorem 4.2.2 of \cite{App1} that $\mu \in {\mathcal M}^{F}_{c}(G)$ if and only if for each $\pi \in \G$ there exists $c_{\pi} \in \C$ so that $\widehat{\mu}(\pi) = c_{\pi}I_{\pi}$. It then follows that $\widehat{\mu^{(K)}}(\phi_{\pi}) = c_{\pi}$ for all $\pi \in G_{s}$. We will see important examples of central measures in the next section. If $G$ is abelian, then all measures on $G$ are central, and all irreducible representations of $G$ are one--dimensional. The next proposition presents some evidence that if $G$ is compact and non--abelian and $\mu \in {\mathcal M}^{F}_{c}$ is non--trivial, then $\mu^{(K)}$ is not central, in general.\footnote{In private e--mail communication with the authors, Ming Liao has produced an example of a non--trivial measure on a compact group that is both central and $K$--bi--invariant.}%Indeed if $c_{\pi} \neq 0$, for some $\pi \in \G_{s}$, then $\mu^{(K)}$ cannot be central by Proposition \ref{FT1} (3). To be more precise

\begin{prop} \label{nocent1} Let $\mu \in {\mathcal M}^{F}_{c}(G)$, so that $\widehat{\mu}(\pi) = c_{\pi}I_{\pi}$ for all $\pi \in \G$, and assume that there exists $\pi \in \G_{s}$ with dim$(V_{\pi}) > 1$. If $\mu$ is $K$--bi--invariant, then $c_{\pi} = 0$.
\end{prop}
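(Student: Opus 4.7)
The plan is to combine the centrality hypothesis with the bi--invariance characterisation from Proposition~\ref{FT1}(3), and read off the conclusion from a rank comparison on $V_{\pi}$.

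First I would note that since $\mu$ is central, its Fourier transform on the given spherical $\pi \in \G_{s}$ is the scalar matrix $\widehat{\mu}(\pi) = c_{\pi}I_{\pi}$. Since $\mu$ is also $K$--bi--invariant and $\pi \in \G_{s}$, Proposition~\ref{FT1}(3) gives
$$ E^{K}_{\pi}\widehat{\mu}(\pi) E^{K}_{\pi} = \widehat{\mu}(\pi).$$
Substituting $\widehat{\mu}(\pi) = c_{\pi}I_{\pi}$ and using $(E^{K}_{\pi})^{2} = E^{K}_{\pi}$ (since $E^{K}_{\pi}$ is an orthogonal projection), this simplifies to
$$ c_{\pi} E^{K}_{\pi} = c_{\pi} I_{\pi}.$$

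Next I would invoke the fact that $E^{K}_{\pi}$ is the orthogonal projection from $V_{\pi}$ onto the subspace $V^{K}_{\pi}$ of $K$--spherical vectors, which is one--dimensional because $\pi$ is spherical (the spherical vector $u_{\pi}$ is unique up to scalar). Since $\dim V_{\pi} > 1$, we therefore have $E^{K}_{\pi} \neq I_{\pi}$, so the operator $E^{K}_{\pi} - I_{\pi}$ is non--zero. The displayed equation then reads $c_{\pi}(E^{K}_{\pi} - I_{\pi}) = 0$, forcing $c_{\pi} = 0$, as required.

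There is essentially no obstacle here: the argument is a direct rank/dimension comparison that exploits the tension between centrality (which forces the Fourier transform at $\pi$ to be a full--rank scalar multiple of $I_{\pi}$ unless it vanishes) and $K$--bi--invariance (which forces the Fourier transform at a spherical $\pi$ to have range inside the one--dimensional subspace $V^{K}_{\pi}$). The only small care needed is recording that $V^{K}_{\pi}$ is one--dimensional for spherical $\pi$, which is part of the definition recalled just before Proposition~\ref{PW}.
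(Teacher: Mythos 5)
Your argument is correct and is essentially the paper's own proof: apply Proposition~\ref{FT1}(3) to get $c_{\pi}I_{\pi} = c_{\pi}E^{K}_{\pi}$, then use that $E^{K}_{\pi}$ has one--dimensional range while $\dim V_{\pi} > 1$ to force $c_{\pi} = 0$. The only cosmetic difference is that the paper phrases the last step as picking a non--zero vector in $(E^{K}_{\pi})^{\bot}$ rather than noting $E^{K}_{\pi} \neq I_{\pi}$, which is the same observation.
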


\begin{proof} By Proposition \ref{FT1}(3),
$$ \widehat{\mu}(\pi) = c_{\pi}I_{\pi} = E^{K}_{\pi}\widehat{\mu}(\pi)E^{K}_{\pi},$$
from which we deduce that $c_{\pi} I_{\pi} = c_{\pi}E^{K}_{\pi}$. Assume $c_{\pi} \neq 0$; since the range of $E^{K}_{\pi}$ is one--dimensional, we can find a non-zero vector in $(E^{K}_{\pi})^{\bot}$ and this yields the desired contradiction. \end{proof}

We return to the study of convolution semigroups $(\mu_{t}, t \geq 0)$. We are interested in the case where $\mu_{t}$ has a continuous density $f_{t}$ for all $t > 0$. The following theorem is essentially due to Liao \cite{LiaoN}, Theorem 4.8.

\begin{theorem} \label{Lt} Let $(\mu_{t}, t \geq 0)$ be a convolution semigroup of probability measures on the compact group $G$. The following are equivalent:
\begin{enumerate}
\item $\mu_{t}$ has an $L^{2}$--density for all $t > 0$,
\item $\mu_{t}$ has a continuous density for all $t > 0$,
\item The series $\sum_{\pi \in \G}d_{\pi}\tr(\widehat{\mu_{t}}(\pi)\pi(g))$ converges absolutely and uniformly in $g \in G$ for all $t > 0$.
\end{enumerate}
\end{theorem}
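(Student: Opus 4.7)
The plan is to prove the cycle $(2) \Rightarrow (1) \Rightarrow (3) \Rightarrow (2)$. The implication $(2) \Rightarrow (1)$ is immediate: on the compact group $G$ with normalised Haar measure, any continuous density is bounded and hence lies in $L^{2}(G)$.

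For $(1) \Rightarrow (3)$, I would exploit the convolution semigroup structure. Given $t > 0$, pick an arbitrary splitting $t = s + u$ with $s, u > 0$, so that $\widehat{\mu_{t}}(\pi) = \widehat{\mu_{u}}(\pi)\widehat{\mu_{s}}(\pi)$ for every $\pi \in \G$. Since $\pi(g)$ is unitary and $|\tr(AB)| \leq \|A\|_{\HS}\|B\|_{\HS}$, one obtains the pointwise estimate
$$|\tr(\widehat{\mu_{t}}(\pi)\pi(g))| \leq \|\widehat{\mu_{u}}(\pi)\|_{\HS}\,\|\widehat{\mu_{s}}(\pi)\pi(g)\|_{\HS} = \|\widehat{\mu_{u}}(\pi)\|_{\HS}\,\|\widehat{\mu_{s}}(\pi)\|_{\HS},$$
valid uniformly in $g \in G$. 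Multiplying by $d_{\pi}$ and applying Cauchy--Schwarz to the sequences $(d_{\pi}^{1/2}\|\widehat{\mu_{u}}(\pi)\|_{\HS})_{\pi}$ and $(d_{\pi}^{1/2}\|\widehat{\mu_{s}}(\pi)\|_{\HS})_{\pi}$ yields
$$\sum_{\pi \in \G} d_{\pi}|\tr(\widehat{\mu_{t}}(\pi)\pi(g))| \leq \left(\sum_{\pi \in \G} d_{\pi}\|\widehat{\mu_{u}}(\pi)\|_{\HS}^{2}\right)^{1/2}\left(\sum_{\pi \in \G} d_{\pi}\|\widehat{\mu_{s}}(\pi)\|_{\HS}^{2}\right)^{1/2},$$
and by hypothesis (1) together with the Parseval characterisation (\ref{Apref}), both factors on the right are finite. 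As this bound is independent of $g$, assertion (3) follows.

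For $(3) \Rightarrow (2)$, the uniform absolute convergence of a series of continuous functions $g \mapsto d_{\pi}\tr(\widehat{\mu_{t}}(\pi)\pi(g))$ produces a continuous limit $F_{t}$ on $G$. Uniform convergence on the compact group entails $L^{2}$--convergence, so by (\ref{dens1}) the $L^{2}$--sum is a version of the density of $\mu_{t}$; consequently $F_{t}$ is a continuous density of $\mu_{t}$, establishing (2).

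I expect the main obstacle to be the implication $(1) \Rightarrow (3)$. The naive bound $|\tr(\widehat{\mu_{t}}(\pi)\pi(g))| \leq \sqrt{d_{\pi}}\,\|\widehat{\mu_{t}}(\pi)\|_{\HS}$ leaves one attempting to control $\sum_{\pi} d_{\pi}^{3/2}\|\widehat{\mu_{t}}(\pi)\|_{\HS}$, which does not follow from Parseval alone because $\sum_{\pi} d_{\pi}^{2}$ typically diverges on an infinite compact group. The resolution is precisely to split $t = s + u$ and distribute the troublesome factor $d_{\pi}^{3/2}$ evenly across two genuine $L^{2}$ Parseval sums; this step uses the convolution semigroup hypothesis in an essential way, and cannot be bypassed by merely assuming a single $\mu_{t}$ has an $L^{2}$--density.
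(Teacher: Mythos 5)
Your proposal is correct and follows essentially the same route as the paper (which reproduces Liao's argument): the key implication (1) $\Rightarrow$ (3) is obtained from the semigroup factorisation $\widehat{\mu_{t}}(\pi)=\widehat{\mu_{u}}(\pi)\widehat{\mu_{s}}(\pi)$, the inequality $|\tr(A^{*}B)|\leq \|A\|_{HS}\|B\|_{HS}$ together with unitarity of $\pi(g)$, and Cauchy--Schwarz over $\pi$ --- the paper organises this as a tail estimate over $\G\setminus\G_{0}$, you as a $g$--independent dominating (Weierstrass--type) series, which is the same estimate. The only point to make explicit in (3) $\Rightarrow$ (2) is the identification of the continuous limit as the density of $\mu_{t}$: since (\ref{dens1}) as stated presupposes that $\mu_{t}$ already has an $L^{2}$--density, you should first note that $L^{2}$--convergence of the orthogonal series gives (\ref{Apref}) (equivalently, argue via uniqueness of Fourier transforms, as the paper does by citing the argument of Theorem 4.5.1 of \cite{App1}).
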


\begin{proof} (2) implies (1) is obvious as $C(G) \subseteq L^{2}(G)$ for $G$ compact. (3) implies (2) since if, for each $g \in G, t > 0$, we define
$$ f_{t}(g) = \sum_{\pi \in \G}d_{\pi}\tr(\widehat{\mu_{t}}(\pi)\pi(g)).$$
then $f_{t}$ is the uniform limit of a sequence of continuous functions on $G$ and so is continuous. The fact that $f_{t}$ is the Radon--Nikodym derivative of $\mu_{t}$ follows by the argument of Theorem 4.5.1 in \cite{App1}. To show that (1) implies (3), we present the argument given in the proof of \cite{LiaoN}, Theorem 4.8. First choose $r > 0$ and define $f_{r/2}$ as above. By the Plancherel theorem and (\ref{Apref}), $f_{r/2} \in L^{2}(G)$ and $||f_{r/2}||^{2} = \sum_{\pi \in \G}d_{\pi}||\widehat{\mu_{r/2}}(\pi)||_{HS}^{2} < \infty$. Then given any $\epsilon > 0$, there exists a finite set $\G_{0} \subset \G$ so that $$\sum_{\pi \in \G \setminus \G_{0}}d_{\pi}||\widehat{\mu_{r/2}}(\pi)||_{HS}^{2} < \epsilon^{2}.$$ Using the matrix inequality $|\tr(A^{*}B)| \leq ||A||_{HS}||B||_{HS}$ for $A, B \in M_{d_{\pi}}(\C)$, and the Cauchy--Schwarz inequality, we have for all for all $g \in G, t > r/2$,
\bean & &  \sum_{\pi \in \G \setminus \G_{0}}d_{\pi}|\tr(\widehat{\mu_{t}}(\pi)^{*}\pi(g))|\\ & \leq & \sum_{\pi \in \G \setminus \G_{0}}d_{\pi}|\tr(\widehat{\mu_{r/2}}(\pi)^{*}\widehat{\mu_{t-r/2}}(\pi)^{*}\pi(g))| \\
& = & \sum_{\pi \in \G \setminus \G_{0}}d_{\pi}||\widehat{\mu_{r/2}}(\pi)||_{HS}||\widehat{\mu_{t-r/2}}(\pi)||_{HS} \\
& \leq & \left(\sum_{\pi \in \G \setminus \G_{0}}d_{\pi}||\widehat{\mu_{r/2}}(\pi)||_{HS}^{2}\right)^{1/2}\left(\sum_{\pi \in \G \setminus \G_{0}}d_{\pi}||\widehat{\mu_{t-r/2}}(\pi)||_{HS}^{2}\right)^{1/2}\\
& \leq & \epsilon ||f_{t - r/2}||. \eean
\end{proof}

For the remainder of this paper, we assume that $X$ is a compact (globally Riemannian) symmetric space, so that $G$ is a compact Lie group with Lie algebra having dimension $d$.
Let $L = (L(t), t \geq 0)$ be a (left) L\'{e}vy process on $G$ so that $L$ has stationary and independent increments and is stochastically continuous (see e.g. \cite{Liao} for relevant background). For each $t \geq 0$, let $\mu_{t}$ denote the law of $L(t)$, so that $\mu_{t}(A) = P(L(t) \in A)$ for all $A \in {\mathcal B}(G)$, then $(\mu_{t}, t \geq 0)$ is a convolution semigroup of probability measures on $G$. We say that the process $L$ is $K$--bi--invariant, if $\mu_{t}$ is $K$--bi--invariant for all $t \geq 0$ (and so $\mu_{0} = m_{K}$). For $K$--bi--invariant $L$, define $Y = (Y(t), t \geq 0)$ by $Y(t) = \xi(L(t))$ for all $t \geq 0$. Then as is shown in \cite{LiaoN} (see also Theorem 3.2 in \cite{Berg1}), $(Y(t), t \geq 0)$ is a $G$-invariant Feller process on $X = G/K$, with $Y(0) = o$ (a.s.)\footnote{The most general $G$--invariant Feller process in $X$ is obtained by taking $L$ to be a $K$--conjugate--invariant L\'{e}vy process, as shown in Theorems 1.17 and 3.10 of \cite{LiaoN}; see also Theorem 2.2 in \cite{Liao}. This larger class of processes is not so convenient for the spectral theoretic considerations discussed in section 5.}   The $G$--invariance is manifest as follows: for each $t \geq 0, x \in X, A \in {\mathcal B}(X)$, let $q_{t}(x, A) = P(Y(t) \in A| Y(0) = x)$ be the usual transition probability, then for all $g \in G$:
$$ q_{t}(\tau(g)x, \tau(g)A) = q_{t}(x, A).$$ If $(Q_{t}, t \geq 0)$ is the transition semigroup of the process $Y$, then for all $t \geq 0, f \in C(X), x \in X$,
$$ Q_{t}f(x) = \int_{X}f(y)q_{t}(x, dy),$$
and as is easily verified (see also  Proposition 1.16 of \cite{LiaoN})
\begin{equation} \label{semi1}
Q_{t}f \circ \xi = P_{t}(f \circ \xi).
\end{equation}

\begin{theorem} \label{dequiv} Let $(L(t), t \geq 0)$ be a $K$--bi--invariant L\'{e}vy process on $G$, and $(Y(t), t \geq 0)$ be the projected Feller process on $X$.
If for all $t > 0, L(t)$ has a continuous density $\rho_{t}$, then $Y(t)$ has a continuous transition density $k_{t}(\cdot, \cdot)$, and for all $g, h \in G$ we have
\begin{equation} \label{dequiv1}
k_{t}(gK, hK) = \rho_{t}(g^{-1}h).
\end{equation}
\end{theorem}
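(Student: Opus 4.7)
The plan is to (i) transfer the density $\rho_t$ on $G$ to a density on $X$ when starting from $o$, and (ii) bootstrap to an arbitrary starting point via $G$-invariance of the transition probabilities.

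First I would record that, since $\mu_t$ is $K$-bi-invariant and absolutely continuous, the density $\rho_t$ is itself $K$-bi-invariant (up to a null set, and by continuity, everywhere): the $K$-right-invariance is Proposition \ref{RNri}, and the $K$-left-invariance follows from the analogous argument on the left indicated in the remarks after Theorem \ref{L2}. Consequently $\rho_t$ descends to a continuous function $H_t$ on $X$ via $\rho_t = H_t \circ \xi = H_t^{\xi}$, and by Proposition \ref{RN1} the pushforward $(\mu_t)_{\xi}$ has Radon--Nikodym derivative $H_t$ with respect to $\sigma$.

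Next I treat the case $x = o$. Since $Y(0) = o$ a.s.\ under the canonical starting measure and $Y(t) = \xi(L(t))$, for any $A \in {\mathcal B}(X)$
\begin{equation*}
q_t(o, A) \;=\; P(L(t) \in \xi^{-1}(A)) \;=\; \mu_t(\xi^{-1}(A)) \;=\; (\mu_t)_{\xi}(A) \;=\; \int_A H_t(y)\,\sigma(dy),
\end{equation*}
so $k_t(o, y) := H_t(y)$ is a continuous transition density in the second variable, starting from $o$. For a general starting point $x = gK$, the $G$-invariance of $q_t$ gives
\begin{equation*}
q_t(gK, A) \;=\; q_t(\tau(g)o, \tau(g)\tau(g^{-1})A) \;=\; q_t(o, \tau(g^{-1})A) \;=\; \int_{\tau(g^{-1})A} H_t(y)\,\sigma(dy),
\end{equation*}
and then a change of variable using $G$-invariance of $\sigma$ (applied in the form $\sigma \circ \tau(g^{-1})^{-1} = \sigma$) yields $q_t(gK, A) = \int_A H_t(\tau(g^{-1})y)\,\sigma(dy)$. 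Setting $k_t(gK, y) := H_t(\tau(g^{-1})y)$ and using $\tau(g^{-1})(hK) = g^{-1}hK = \xi(g^{-1}h)$, we obtain
\begin{equation*}
k_t(gK, hK) \;=\; H_t(\xi(g^{-1}h)) \;=\; H_t^{\xi}(g^{-1}h) \;=\; \rho_t(g^{-1}h),
\end{equation*}
which is the desired formula.

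Finally I would verify the two remaining loose ends. Well-definedness of $k_t$ on $X \times X$: if $g' = gk$ and $h' = hk'$ with $k, k' \in K$, then $(g')^{-1}h' = k^{-1}(g^{-1}h)k'$, and $\rho_t(k^{-1}(g^{-1}h)k') = \rho_t(g^{-1}h)$ by the $K$-bi-invariance of $\rho_t$ established in the first step. Joint continuity of $k_t$ on $X\times X$: the map $(g,h) \mapsto \rho_t(g^{-1}h)$ is continuous on $G\times G$ by continuity of $\rho_t$ and of the group operations, it factors through the (open, continuous) quotient $\xi \times \xi$ by the bi-invariance just noted, and such a factorisation through an open quotient preserves continuity. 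The only step that requires a little care is checking that the descent $\rho_t \mapsto H_t$ preserves continuity---but this is covered by the last sentence of Proposition \ref{RN1}---so no genuine obstacle arises; the content of the theorem is really just the identification of $G$-invariance of $q_t$ with the formula $k_t(gK,hK) = \rho_t(g^{-1}h)$.
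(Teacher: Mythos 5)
Your proof is correct, and it reaches the formula by a route that differs in its bookkeeping from the paper's. The paper works test--function--wise for an arbitrary starting point in one sweep: it invokes the intertwining relation $Q_{t}f\circ\xi = P_{t}(f\circ\xi)$ of (\ref{semi1}), writes $P_{t}(f\circ\xi)(g)=\int_{G}(f\circ\xi)(h)\rho_{t}(g^{-1}h)\,dh$, and then uses the $K$--right--invariance of $\rho_{t}$ together with (\ref{int1}) and (\ref{use}) to push the integral down to $X$, reading off $k_{t}(gK,\cdot)=\widetilde{\rho_{t}}\circ\tau(g^{-1})$ directly. You instead argue measure--wise: you identify $q_{t}(o,\cdot)=(\mu_{t})_{\xi}$ and use Proposition \ref{RN1} to get the density $H_{t}$ at the base point, then bootstrap to a general starting point via the stated $G$--invariance of $q_{t}$ and the $G$--invariance of $\sigma$. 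The two arguments rest on the same essential ingredients (descent of $\rho_{t}$ through $\xi$, plus invariance under the $G$--action, the latter packaged in (\ref{semi1}) in the paper and in the invariance of $q_{t}$ in your version), so neither is more general; what your decomposition buys is transparency about where each invariance is used, and you additionally make explicit two points the paper leaves implicit, namely that well--definedness of $(gK,hK)\mapsto\rho_{t}(g^{-1}h)$ requires the $K$--\emph{bi}--invariance of the continuous version of $\rho_{t}$ (left--invariance for the first slot, right--invariance for the second), and that joint continuity of $k_{t}$ follows because $\xi\times\xi$ is an open continuous surjection, hence a quotient map. These additions are worthwhile and correct.
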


\begin{proof} Using (\ref{semi1}) and (\ref{int1}), for all $t > 0, f \in C(X), g \in G$
\bean \int_{X}f(x)q_{t}(gK, dx) & = & Q_{t}f(gK)\\
& = & P_{t}(f \circ \xi)(g)\\
& = & \int_{G}(f \circ \xi)(h)\rho_{t}(g^{-1}h)dh \\
& = & \int_{X}f(x)\widetilde{\rho_{t}}(\tau(g^{-1})x)\sigma(dx),\eean
where $\widetilde{\rho_{t}}$ is the unique function in $C(X)$ so that $\rho_{t} = \widetilde{\rho_{t}} \circ \xi$. So the required transition density exists and  for all $g, h \in G$, we have by (\ref{use}),
\bean k_{t}(gK, hK) & = & \widetilde{\rho_{t}}(\tau(g^{-1})\xi(h))\\
& = & (\widetilde{\rho_{t}} \circ \xi)(l_{g^{-1}}h)\\
& = & \rho_{t}(g^{-1}h). \eean \end{proof}

\section{Eigenfunction Expansions for the Transition Kernel}

Let the processes $L$ and $Y$ be as in the previous section, so that $(\mu_{t}, t \geq 0)$ is a $K$--bi--invariant convolution semigroup on $G$. We continue to assume that $\mu_{t}$ has a continuous density $\rho_{t}$ for all $t > 0$.  We equip $G$ with an Ad--invariant Riemannian metric which is compatible with the Riemannian structure on $X$, and let $\Delta$ be the associated Laplace--Beltrami operator on $G$. Then $\{\kappa_{\pi}, \pi \in \G\}$ will denote the Casimir spectrum for $G$ so that $\kappa_{\pi} \geq 0$ (with $\kappa_{\pi} = 0$ if and only if $\pi$ is trivial) and $\Delta \phi_{\pi} = - \kappa_{\pi}\phi_{\pi}$ for all $\pi \in \G_{s}$. Assume that the symmetric space $X$ is irreducible, in that the action of Ad$(K)$ on $\p$ is irreducible, where $\p:=\g \ominus \fk$, and $\fk$ is the Lie algebra of $K$. A sufficient condition for this to hold is that $G$ is semisimple (see Proposition 5.12 in \cite{LiaoN}).

Then Gangolli's L\'{e}vy Khinchine formula (see e.g. \cite{Gang1}, \cite{Appb}, \cite{LW}) tells us that for all $t \geq 0, \pi \in \G_{s}$

\begin{equation} \label{Gang1}
\widehat{\mu_{t}}(\phi_{\pi}) = e^{- t\chi_{\pi}},
\end{equation}

where

\begin{equation} \label{Gang2}
\chi_{\pi} = a \kappa_{\pi} + \int_{G}(1 - \phi_{\pi}(g)) \nu(dg),
\end{equation}

with $a \geq 0$ and $\nu$ a $K$--bi--invariant L\'{e}vy measure on $(G, {\mathcal B}(G))$. It follows easily from Proposition \ref{nocent1} that if $G$ is non--abelian, then $\mu_{t}$ cannot be central for $t > 0$.

\begin{theorem} \label{fspecth}
Suppose that $(\mu_{t}, t \geq 0)$ is a $K$--bi--invariant convolution semigroup.
\begin{enumerate} \item For all $t \geq 0, \pi \in \G, 1 \leq i, j \leq d_{\pi}$,
\begin{equation} \label{fspec}
P_{t}\pi_{ij}  =  \left\{\begin{array}{c c} e^{-t \overline{\chi_{\pi}}}
\pi_{ij} & \mbox{if}~i = 1, \pi \in \G_{s}\\
  0 & \mbox{otherwise} \end{array} \right. \end{equation}

 \item If $\mu_{t}$ has a continuous density for all $t \geq 0$, then $P_{t}$ is trace-class in $L^{2}(G)$, and its trace is given by
 $$\mbox{Tr}(P_{t}) = \sum_{\pi \in \G_{s}}d_{\pi}e^{-t \chi_{\pi}}.$$
 \end{enumerate}
 \end{theorem}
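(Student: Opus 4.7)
The plan for part (1) is to compute $P_t \pi_{ij}(g)$ from first principles. Using $\pi(gh) = \pi(g)\pi(h)$,
$$ P_t \pi_{ij}(g) = \int_G \pi(gh)_{ij}\,\mu_t(dh) = \sum_{k=1}^{d_\pi} \pi_{ik}(g)\, M(\pi)_{kj}, \qquad M(\pi) := \int_G \pi(h)\,\mu_t(dh).$$
The matrix $M(\pi)$ is simply $\widehat{\mu_t}(\pi)^{*}$ (equivalently, the Fourier transform of the reflected measure $\mu_t^{\prime}$, which is still $K$--bi--invariant), so the argument used for Proposition \ref{FT1}(3) applies to it: $K$--left--invariance of $\mu_t$ yields $E_\pi^K M(\pi) = M(\pi)$, $K$--right--invariance yields $M(\pi) E_\pi^K = M(\pi)$, and together these give $M(\pi) = E_\pi^K M(\pi) E_\pi^K$. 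Since $E_\pi^K$ is the rank--one projection onto $\C e_1^{\pi}$, this forces $M(\pi) = 0$ when $\pi \notin \G_s$, and for $\pi \in \G_s$ concentrates $M(\pi)$ on its $(1,1)$ entry. Identifying $M(\pi)_{11} = \int_G \pi(h)_{11}\,\mu_t(dh)$ with the spherical transform $\widehat{\mu_t}(\phi_\pi)$ (up to complex conjugation, dictated by the inner--product convention defining $\phi_\pi$) and invoking Gangolli's formula (\ref{Gang1}), this scalar equals $e^{-t\chi_\pi}$ up to conjugation. Substituting back gives (\ref{fspec}).

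For part (2), I would exploit the semigroup decomposition $P_t = P_{t/2} \circ P_{t/2}$. The change of variable $h \mapsto g^{-1}h$ in $P_{t/2} f(g) = \int_G f(gh)\rho_{t/2}(h)\,dh$ realises $P_{t/2}$ as an integral operator on $L^2(G)$ with kernel $k(g,h) = \rho_{t/2}(g^{-1}h)$. Left--invariance of Haar measure together with $m_G(G)=1$ gives
$$\int_G\int_G |k(g,h)|^2\,dg\,dh = ||\rho_{t/2}||_{L^2(G)}^2 < \infty,$$
so $P_{t/2}$ is Hilbert--Schmidt on $L^2(G)$. Being the product of two Hilbert--Schmidt operators, $P_t$ is then trace--class.

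For the trace formula, part (1) combined with the Peter--Weyl orthonormal basis $\{\sqrt{d_\pi}\pi_{ij}\}$ furnishes a complete spectral decomposition of $P_t$: for each $\pi \in \G_s$, the eigenvalue $e^{-t\chi_\pi}$ appears with multiplicity $d_\pi$ on the subspace of matrix coefficients with the privileged spherical index equal to $1$, while every other basis vector lies in $\ker P_t$. Summing yields $\mbox{Tr}(P_t) = \sum_{\pi \in \G_s} d_\pi\, e^{-t\chi_\pi}$. As a sanity check, the trace of a trace--class integral operator with continuous kernel is the integral of its kernel along the diagonal, giving $\mbox{Tr}(P_t) = \int_G \rho_t(e)\,dg = \rho_t(e)$; substituting the $L^2$--Fourier expansion (\ref{dens2}) at $g = e$ reproduces the same series. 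The only genuine obstacle is notational bookkeeping with conjugation and row--vs--column conventions in order to land on the precise form of the eigenvalue in (\ref{fspec}); once one identification is pinned down consistently, every step above is mechanical.
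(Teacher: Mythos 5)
Your part (1) is, modulo conventions, the paper's own argument: expand $P_{t}\pi_{ij}$ using multiplicativity of $\pi$, recognise the matrix $M(\pi)=\int_{G}\pi(h)\,\mu_{t}(dh)=\widehat{\mu_{t}}(\pi)^{*}$, use the bi--invariance (Proposition \ref{FT1}(3), or its adjoint form) to kill everything except the $(1,1)$ entry, and identify that entry with the spherical transform via Gangolli's formula (\ref{Gang1}). In part (2) you genuinely deviate only in the trace--class step: where the paper cites Theorem 5.4.4 of \cite{App1}, you factor $P_{t}=P_{t/2}P_{t/2}$ and observe that $P_{t/2}$ is Hilbert--Schmidt because its kernel $\rho_{t/2}(g^{-1}h)$ is square--integrable on $G\times G$; this is a correct, self--contained substitute, and is a nice economy since continuity of $\rho_{t/2}$ on compact $G$ already gives $\rho_{t/2}\in L^{2}(G)$.

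The one place where your ``conjugation is bookkeeping'' policy conceals a real step is the passage from (1) to the trace formula. With the paper's convention $\pi_{ij}(g)=\la \pi(g)e_{i}^{\pi},e_{j}^{\pi}\ra$ (so that $\phi_{\pi}=\overline{\pi_{11}}$), part (1) gives the eigenvalue $e^{-t\overline{\chi_{\pi}}}$, and summing over the Peter--Weyl basis (or integrating the kernel along the diagonal, as in your sanity check, which returns $\rho_{t}(e)=\sum_{\pi\in\G_{s}}d_{\pi}e^{-t\overline{\chi_{\pi}}}$) yields the \emph{conjugated} series. To obtain the stated $\sum_{\pi\in\G_{s}}d_{\pi}e^{-t\chi_{\pi}}$ you need the observation the paper makes explicitly: the conjugate representation $\widetilde{\pi}$ is again in $\G_{s}$, with $d_{\widetilde{\pi}}=d_{\pi}$ and $\chi_{\widetilde{\pi}}=\overline{\chi_{\pi}}$, so reindexing removes the conjugation (equivalently, the trace is real). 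Relatedly, your multiplication rule $\pi(gh)_{ij}=\sum_{k}\pi(g)_{ik}\pi(h)_{kj}$ is the standard matrix convention, which singles out the first \emph{column} $\pi_{i1}$; this agrees with the condition $i=1$ in (\ref{fspec}) only after translating to the paper's (transposed) convention. Neither point breaks the proof, but the conjugate--representation pairing is needed for the statement of (2) to follow literally from (1), so it should be made explicit rather than filed under notation.
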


 \begin{proof}
 \begin{enumerate}
 \item We argue as in the proof of Theorem 5.3 in \cite{App3}. First observe that since for each $1 \leq i,j \leq d_{\pi}, g,h \in G, \pi_{ij}(g): = \la \pi(g)e_{i}^{\pi}, e_{j}^{\pi} \ra$ and
 $\pi(gh) = \pi(g)\pi(h)$, then $\pi_{ij}(gh) = \sum_{k=1}^{d_{\pi}}\pi_{ik}(h)\pi_{kj}(g)$. Hence
 \bean P_{t}\pi_{ij}(g) & = & \int_{G}\pi_{ij}(gh)\mu_{t}(dh)\\
 & = & \sum_{k=1}^{d_{\pi}}\pi_{kj}(g)\int_{G}\pi_{ik}(h)\mu_{t}(dh)\\
 & = & \sum_{k=1}^{d_{\pi}}\pi_{kj}(g)\int_{G}\overline{\pi_{ki}(h^{-1})}\mu_{t}(dh)\\
 & = & \sum_{k=1}^{d_{\pi}}\pi_{kj}(g)\overline{\widehat{\mu_{t}}(\pi)_{ki}}, \eean
 and the result follows by Proposition \ref{FT1} (3) and (\ref{Gang1}).

 \item If $\mu_{t}$ has a continuous density, $P_{t}$ is trace--class by Theorem 5.4.4 in \cite{App1}. From (1), we have
  $$\mbox{Tr}(P_{t}) = \sum_{\pi \in \G_{s}}d_{\pi}e^{-t \overline{\chi_{\pi}}},$$
 but for each $\pi \in \G_{s}$, we have $\widetilde{\pi} \in \G_{s}$, where $\widetilde{\pi}$ is the conjugate representation, and the result follows when we observe that
 $\overline{\chi_{\pi}} = \chi_{\widetilde{\pi}}$.
 \end{enumerate}
 \end{proof}

In the last theorem, we calculated the spectrum of $P_{t}$ in the space $L^{2}(G)$. In the next result, we restrict to the closed subspace $L^{2}(K \backslash G/K)$.

\begin{theorem} \label{kerexp}
Suppose that $(\mu_{t}, t \geq 0)$ is a $K$--bi--invariant convolution semigroup.
\begin{enumerate}
\item For all $t \geq 0, \pi \in \G_{s}$,
$$ P_{t}\phi_{\pi} = e^{-t \chi_{\pi}}\phi_{\pi},$$
\item If $\mu_{t}$ has a continuous density for all $t \geq 0$, then for all $g,h \in G$,
\begin{enumerate}
\item
$$k_{t}(gK, hK) =  \sum_{\pi \in \G_{s}}d_{\pi}e^{-t \overline{\chi_{\pi}}}\phi_{\pi}(g^{-1}h),$$

\item %If for all $t > 0$, $\sum_{\pi\in \G_{s}}d_{\pi}e^{-t \Re(\chi_{\pi})} < \infty$, then for all $g, h \in G$,
$$k_{t}(gK, hK) = \sum_{\pi \in \G_{s}}\sum_{j=1}^{d_\pi}d_{\pi}e^{-t \chi_{\pi}}\overline{\pi_{1j}(g)}\pi_{1j}(h),$$

\end{enumerate}
\end{enumerate}
\end{theorem}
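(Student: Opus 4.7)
The plan is to handle part (1) directly from the defining property of spherical functions, and then derive both kernel expansions in part (2) from the density formulas of Section 3 combined with Theorem \ref{dequiv}.

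For (1), I would write $P_{t}\phi_{\pi}(g) = \int_{G}\phi_{\pi}(gh)\mu_{t}(dh)$ and exploit the $K$-left-invariance of $\mu_{t}$ to insert an averaging in the $h$-variable:
$$ P_{t}\phi_{\pi}(g) = \int_{G}\int_{K}\phi_{\pi}(gkh)\,dk\,\mu_{t}(dh).$$
The spherical identity (\ref{spher}) then collapses the inner integral to $\phi_{\pi}(g)\phi_{\pi}(h)$, giving $P_{t}\phi_{\pi}(g) = \phi_{\pi}(g)\,\widehat{\mu_{t}}(\phi_{\pi})$, and Gangolli's formula (\ref{Gang1}) supplies the eigenvalue $e^{-t\chi_{\pi}}$.

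For (2a), Theorem \ref{dequiv} identifies $k_{t}(gK, hK) = \rho_{t}(g^{-1}h)$, where $\rho_{t}$ is the continuous $K$-bi-invariant density of $\mu_{t}$. The $K$-bi-invariant density formula (\ref{dens2}) combined with Gangolli's formula produces the $L^{2}$ identity $\rho_{t} = \sum_{\pi \in \widehat{G}_{s}} d_{\pi}\,e^{-t\overline{\chi_{\pi}}}\phi_{\pi}$; Theorem \ref{Lt}(3) supplies absolute and uniform convergence, which, together with continuity of $\rho_{t}$, upgrades the identity to pointwise equality and yields (2a) after substituting $g^{-1}h$.

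For (2b), I would substitute into (2a) the Parseval expansion of the spherical function in the matrix coefficients, obtained from unitarity of $\pi$ and the orthonormal basis $\{e_{j}^{\pi}\}$:
$$ \phi_{\pi}(g^{-1}h) = \la e_{1}^{\pi}, \pi(g^{-1}h)e_{1}^{\pi}\ra = \la \pi(g)e_{1}^{\pi}, \pi(h)e_{1}^{\pi}\ra = \sum_{j=1}^{d_{\pi}}\pi_{1j}(g)\overline{\pi_{1j}(h)}.$$
To obtain the form with $e^{-t\chi_{\pi}}$ (rather than $e^{-t\overline{\chi_{\pi}}}$) and with the complex conjugate on the $g$-factor, I would reindex the sum by the bijective involution $\pi \mapsto \widetilde{\pi}$ on $\widehat{G}_{s}$, under which $d_{\widetilde{\pi}} = d_{\pi}$, $\chi_{\widetilde{\pi}} = \overline{\chi_{\pi}}$, and—with the natural choice $e_{j}^{\widetilde{\pi}} = \overline{e_{j}^{\pi}}$ for the spherical vector and basis of the conjugate representation—$\widetilde{\pi}_{1j} = \overline{\pi_{1j}}$. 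This rewriting transforms the series to the stated form.

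The main obstacle I anticipate is verifying compatibility of the basis choice for the conjugate representation $\widetilde{\pi}$ in the relabeling step of (2b), namely that one may consistently pick spherical vector and orthonormal basis so that matrix coefficients transform precisely by complex conjugation. This is essentially the same trick already used in the proof of Theorem \ref{fspecth}(2) to convert $\overline{\chi_{\pi}}$ into $\chi_{\widetilde{\pi}}$, so no new ideas are required, but one must state it carefully to avoid sign or conjugation errors.
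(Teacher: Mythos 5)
Your proposal is correct. Parts (1) and (2a) coincide with the paper's own argument: the $K$-average trick plus the spherical identity (\ref{spher}) and (\ref{Gang1}) for the eigenvalue, and then (\ref{dens2}) together with Theorem \ref{dequiv} for the kernel expansion (your explicit appeal to Theorem \ref{Lt}(3) to upgrade the $L^{2}$ identity to a pointwise one is a welcome extra precision that the paper leaves implicit). Where you genuinely diverge is (2b): the paper does not pass through (2a) at all, but instead expands the translated density $L_{g}\rho_{t}$, which is $K$--right--invariant, in the Peter--Weyl basis $\{\sqrt{d_{\pi}}\pi_{1j}\}$ of $L^{2}_{rK}(G)$ (Proposition \ref{PW}(2)) and computes the Fourier coefficients $\la L_{g}\rho_{t}, \pi_{1j}\ra = e^{-t\chi_{\pi}}\overline{\pi_{1j}(g)}$ directly from the multiplication law of matrix coefficients. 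Your route instead writes $\phi_{\pi}(g^{-1}h)=\sum_{j}\pi_{1j}(g)\overline{\pi_{1j}(h)}$ inside (2a) and then reindexes by the involution $\pi\mapsto\widetilde{\pi}$ on $\G_{s}$, using $d_{\widetilde{\pi}}=d_{\pi}$, $\chi_{\widetilde{\pi}}=\overline{\chi_{\pi}}$ and $\widetilde{\pi}_{1j}=\overline{\pi_{1j}}$; this is legitimate because the inner sum $\sum_{j}\overline{\pi_{1j}(g)}\pi_{1j}(h)=\overline{\phi_{\pi}(g^{-1}h)}$ is independent of the basis choice, the reindexing bookkeeping is exactly the device the paper already uses in Theorem \ref{fspecth}(2), and absolute convergence justifies the rearrangement. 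The trade-off: your derivation makes the relation between (2a) and (2b) transparent at the cost of the conjugate-representation relabelling, while the paper's direct $L^{2}_{rK}(G)$ expansion avoids any relabelling and exhibits the coefficients $e^{-t\chi_{\pi}}\overline{\pi_{1j}(g)}$ as genuine Fourier coefficients of the kernel.
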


\begin{proof}
\begin{enumerate}

\item This can be deduced from Theorem \ref{fspecth}(1), but alternatively, using Fubini's theorem and (\ref{Gang1}), we have for all $g \in G$,
\bean P_{t}\phi_{\pi}(g) & = & \int_{G}\phi_{\pi}(gh)\mu_{t}(dh)\\
& = & \int_{G}\int_{K}\phi_{\pi}(gkh)dk \mu_{t}(dh)\\
& = & \widehat{\mu_{t}}(\phi_{\pi})\phi_{\pi}(g)\\
& = & e^{-t \chi_{\pi}}\phi_{\pi}. \eean

\item
\begin{enumerate}

\item By Fourier expansion in $L^{2}(K \backslash G/K)$,
\bean \rho_{t} & = &  \sum_{\pi \in \G_{s}} d_{\pi} \la \rho_{t}, \phi_{\pi} \ra \phi_{\pi}\\
& = & \sum_{\pi \in \G_{s}}d_{\pi} \overline{\widehat{\mu_{t}}(\phi_{\pi})} \phi_{\pi}, \eean
and so
$$ \rho_{t}(g^{-1}h) = \sum_{\pi \in \G_{s}}d_{\pi} e^{-t \overline{\chi_{\pi}}}\phi_{\pi}(g^{-1}h).$$

The result then follows from Theorem \ref{dequiv}.

\item As $L_{g}\rho_{t}$ is $K$--right--invariant for all $g \in G$, we may use Proposition \ref{PW}(2) to write,
$$L_{g}\rho_{t}  =   \sum_{\pi \in \G_{s}} d_{\pi} \sum_{j = 1}^{d_{\pi}} \la L_{g}\rho_{t}, \pi_{1j} \ra \pi_{1j},$$
but for each $j = 1, \ldots, d_{\pi}$, \bean \la L_{g}\rho_{t}, \pi_{1j} \ra & = & \int_{G}\rho_{t}(g^{-1}g^{\prime})\overline{\pi_{1j}(g^{\prime})}dg^{\prime} \\
& = & \int_{G}\rho_{t}(g^{\prime})\overline{\pi_{1j}(gg^{\prime})}dg^{\prime}\\
& = & \sum_{k=1}^{d_{\pi}}\overline{\pi_{kj}(g)}\int_{G}\rho_{t}(g^{\prime})\overline{\pi_{1k}(g^\prime)}dg^{\prime}\\
& = & e^{-t \chi_{\pi}}\overline{\pi_{1j}(g)}, \eean
since by $K$--bi--invariance of $\rho_{t}, \la \rho_{t}, \pi_{1j} \ra = 0$ for all $j \neq 1$, and the result follows easily from here.

\end{enumerate}
\end{enumerate}
\end{proof}

It is interesting to compare Theorem \ref{kerexp} (2) (a) with results obtained by Bochner for spheres (see \cite{Bo} p.1146). In the case of the heat kernel (so $\nu = 0$ in (\ref{Gang2})), a formula of this type on general compact homogeneous spaces is presented in \cite{Ben}.

%In Theorem \ref{kerexp}, we calculated the spectrum of $P_{t}$ in the space $L^{2}(K \backslash G /K)$. We also find it useful to do this in the space $L^{2}(G)$, and using Proposition \ref{FT1}(3) and the argument of Theorem 5.3 in \cite{App3}, it is not difficult to verify that for all $t \geq 0, \pi \in \G, 1 \leq i, j \leq d_{\pi}$,

%\begin{equation} \label{fspec}
%P_{t}\pi_{ij}  =  \left\{\begin{array}{c c} e^{-t \overline{\chi_{\pi}}}
%\pi_{ij} & \mbox{if}~j = 1, \pi \in \G_{s}\\
% = 0 & \mbox{otherwise} \end{array} \right. \end{equation}

%It follows from Theorem 5.4.4 in \cite{App1} that for each $t > 0$,  $P_{t}$ is trace--class in $L^{2}(G)$, and so the condition of Theorem \ref{kerexp} (3) is always satisfied in our context.

We now easily deduce the following trace formula:

\begin{cor} \label{trform} If $\mu_{t}$ is $K$--bi--invariant and has a square--integrable density for all $t > 0$, then

$$ k_{t}(x,x) = k_{t}(o, o) =  \mbox{Tr}(P_{t}).$$

\end{cor}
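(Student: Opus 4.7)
The plan is to assemble the statement directly from three results already in hand: Theorem \ref{Lt} (to upgrade square--integrability to continuity of $\rho_t$), Theorem \ref{dequiv} (to relate $k_t$ to $\rho_t$), and Theorems \ref{fspecth}(2) and \ref{kerexp}(2)(a) (to identify both sides with the same spherical spectral sum).

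First, since $(\mu_t, t\geq 0)$ is a $K$--bi--invariant convolution semigroup and each $\mu_t$ admits an $L^2$--density for $t>0$, Theorem \ref{Lt} yields a continuous density $\rho_t$ for every $t>0$. Theorem \ref{dequiv} then gives the pointwise formula $k_t(gK, hK) = \rho_t(g^{-1}h)$ for all $g,h\in G$. Setting $g=h$ shows that $k_t(gK, gK) = \rho_t(e)$ is independent of $g$, which already establishes the first equality $k_t(x,x) = k_t(o,o)$ for every $x \in X$.

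For the trace identity, I would evaluate Theorem \ref{kerexp}(2)(a) at $g=h=e$. Since $\phi_\pi(e) = \la e_1^\pi, \pi(e) e_1^\pi\ra = 1$ for every $\pi \in \G_s$, this yields
\begin{equation*}
k_t(o, o) = \sum_{\pi \in \G_s} d_\pi e^{-t\overline{\chi_\pi}}.
\end{equation*}
On the other hand, Theorem \ref{fspecth}(2) (which applies because the continuous density, produced in the first step, makes $P_t$ trace--class in $L^2(G)$) gives
\begin{equation*}
\mathrm{Tr}(P_t) = \sum_{\pi \in \G_s} d_\pi e^{-t\chi_\pi}.
\end{equation*}

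To reconcile the two sums, I would invoke the same symmetry argument used inside the proof of Theorem \ref{fspecth}(2): the map $\pi \mapsto \widetilde{\pi}$ is an involution on $\G_s$ with $d_{\widetilde{\pi}} = d_\pi$ and $\chi_{\widetilde{\pi}} = \overline{\chi_\pi}$ (the latter being read off from the Gangolli formula (\ref{Gang2}), using that $\phi_{\widetilde{\pi}} = \overline{\phi_\pi}$ and that $\kappa_\pi, a$ and $\nu$ are real). Reindexing by $\pi \mapsto \widetilde{\pi}$ therefore shows that the two spectral sums coincide, completing the identity $k_t(o,o) = \mathrm{Tr}(P_t)$.

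There is no serious obstacle here; the only point requiring care is ensuring that the complex conjugate appearing in Theorem \ref{kerexp}(2)(a) matches, after re--indexing over $\G_s$, the un--conjugated exponent in Theorem \ref{fspecth}(2). Everything else is bookkeeping.
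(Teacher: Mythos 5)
Your proposal is correct and follows essentially the same route as the paper, which simply puts $g = h = e$ in Theorem \ref{kerexp}(2); your extra steps (continuity via Theorem \ref{Lt}, the relation $k_t(gK,hK)=\rho_t(g^{-1}h)$, and the reindexing $\pi\mapsto\widetilde{\pi}$ with $\overline{\chi_\pi}=\chi_{\widetilde{\pi}}$) are exactly the ingredients the paper relies on, the last already appearing inside the proof of Theorem \ref{fspecth}(2).
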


\begin{proof} This follows on putting $g = h = e$ in Theorem \ref{kerexp} (2). \end{proof}

{\bf Notes} \begin{enumerate}
\item It is interesting to compare the results obtained herein with those in section 5 of \cite{App3}. We did not need to assume that the  convolution semigroup is central in order to obtain a ``scalar'' L\'{e}vy-Khintchine formula. That follows from $K$--bi--invariance.

\item The formulae for the trace in the two papers are different, in that a factor of $d_{\pi}^{2}$ in the sum has reduced to $d_{\pi}$. This is because (as seen in (\ref{fspec})), the eigenspace for each eigenvalue is spanned by the top row of the representation matrix, rather than the entire set of matrix entries.

\item It is also of interest to calculate the trace Tr$_{K}(P_{t})$ of the semigroup on the space $L^{2}(K \backslash G / K)$. It follows from Theorem \ref{kerexp} (1) (see also section 3 of \cite{App4}), that for each $t > 0$,
$$ \mbox{Tr}_{K}(P_{t}) =    \sum_{\pi \in \G_{s}}e^{-t \chi_{\pi}}.$$

\end{enumerate}

A standard convolution semigroup $(\mu_{t}, t \geq 0)$ is said to be central if $\mu_{t}$ is central for all $t \geq 0$, and it is {\it symmetric} if $\mu_{t}$ is a symmetric measure, i.e. $\mu_{t} = \mu_{t}^{\prime}$ for all $t \geq 0$. Clearly if $(\mu_{t}, t \geq 0)$ is symmetric, then so is $(\mu_{t}^{(K)}, t \geq 0)$. Moreover, it follows from Theorem 2.2 in \cite{App4} (or Theorem 5.4.1 in \cite{App1}) that $P_{t}$ is self--adjoint in $L^{2}(G)$, and the L\'{e}vy measure $\nu$ appearing in (\ref{Gang1}) is symmetric. If $(\mu_{t}, t \geq 0)$ is symmetric, then $\chi_{\pi} \geq 0$ for all $\pi \in \G_{s}$,
%$$ \mbox{Tr}(P_{t}) = \sum_{\pi \in \G_{s}}d_{\pi}e^{-t \chi_{\pi}},$$
and the trace formula of Corollary \ref{trform} is a special case of Mercer's theorem (see e.g. \cite{Dav}, pp.156--7).

Well--known examples of central symmetric convolution semigroups having $C^{\infty}$ densities for $t > 0$, are the Gaussian (heat) semigroup where for all $\pi \in \G, t \geq 0$,
$c_{\pi}(t) = e^{- a t\kappa_{\pi}}$ for some $a > 0$, and the $\alpha$-stable type semigroup where $c_{\pi}(t) = e^{- a t\kappa_{\pi}^{\alpha}}$ for $0 < \alpha < 1$ (see Proposition 5.8.1 in \cite{App1}, pp.157--8). A rather wide class of examples that fit into the context of this section, are obtained by imposing $a > 0$ in (\ref{Gang1}). Then for each $t > 0, \mu_{t}$ is the convolution of a Gaussian measure (as described above) with the law of a pure jump L\'{e}vy process, and $\mu_{t}$ has a $C^{\infty}$ density by Corollary 4.5.1 in \cite{App1}, p.103 (see also Theorem 3 in \cite{LW}).  In each of the above cases, the measure $\mu_{t}^{(K)}$ also has a $C^{\infty}$ density for $t > 0$ by Theorem \ref{inher}.

If $\psi = (\psi(t), t \geq 0)$ is Brownian motion on $G$, then its laws $(\mu_{t}, t \geq 0)$ give the flow of heat kernel measures, and these are central and symmetric, as discussed. In this case $(\mu_{t}^{(K)}, t \geq 0)$ are the laws of $K$--bi--invariant (spherical) Brownian motion on $G$, $\widetilde{\psi} = (\widetilde{\psi}(t), t \geq 0)$. For $t  > 0$, these measures are symmetric, but not central when $G$ is non--abelian. It is interesting to look at these processes from the point of view of stochastic differential equations (sdes). Let $\{X_{1}, \ldots, X_{d}\}$ be an orthonormal basis for $\g$ (with respect to the given Ad-invariant inner product), such that $X_{1}, \ldots, X_{m} \in \p$ and $X_{m+1}, \ldots, X_{d} \in \fk$. Let $B = (B_{1}, \ldots, B_{d})$ be a standard Brownian motion in $\Rd$. Then $\psi$ is the unique solution to the sde
$$ d\psi(t) = \sum_{i=1}^{d}X_{i}(\psi(t)) \circ dB_{i}(t)~, \psi(0) = e~\mbox{(a.s.)},$$
while $\widetilde{\psi}$ is the unique solution to
$$ d\widetilde{\psi}(t) = \sum_{i=1}^{m}X_{i}(\widetilde{\psi}(t)) \circ dB_{i}(t)~, \widetilde{\psi}(0) = U_{K}~\mbox{(a.s.)},$$
for $t \geq 0$, where $U_{K}$ is uniformly distributed on $K$ and $\circ$ denotes the Stratonovitch differential. If $\Delta_{G} = \sum_{i=1}^{d}X_{i}^{2}$ is the group Laplacian, then for $t > 0$ the heat kernel $\kappa_{t}$, which is the density of $\mu_{t}$, is the fundamental solution of the pde $\frac{\partial u(t)}{\partial t} = \Delta_{G}u(t)$. The spherical heat kernel $\rho_{t}$, which is the density of $\mu_{t}^{(K)}$, is the fundamental solution of $\frac{\partial u(t)}{\partial t} = \Delta_{\p}u(t)$, where the ``horizontal Laplacian'' $\Delta_{\p}: = \sum_{i=1}^{m}X_{i}^{2}$. For further details and discussion, see \cite{Appb} and section 3.4 of \cite{LiaoN}.

We close this section by giving a brief account of subordination and short time asymptotics. For background on subordination in compact Lie groups, we refer the reader to section 5.7 of \cite{App1}, and to \cite{AG}. Let $(S(t), t \geq 0)$ be a subordinator  having law $\lambda_{t}$ for  $t \geq 0$, that is independent of the L\'{e}vy process $(L(t), t \geq 0)$. Then for all $u > 0, \int_{[0, \infty)}e^{- u s}\lambda_{t}(ds) = e^{-t \psi(u)}$, where $\psi$ is a Bernstein function such that $\lim_{u \rightarrow 0+}\psi(u) = 0$, so that for all $u > 0$,

\begin{equation} \label{Bern}
\psi(u) = bu + \int_{(0, \infty)}(1 - e^{-yu})\tau(dy),
\end{equation}

\noindent with $b \geq 0$ and $\tau$ a L\'{e}vy measure on $(0, \infty)$, i.e. $\int_{(0, \infty)}(1 \wedge y)\tau(dy) < \infty$.

We subordinate to form a new L\'{e}vy process $L^{S}(t) = L(S(t))$, having law
$\mu^{S}_{t}(A) = \int_{0}^{\infty}\mu_{s}(A)\lambda_{t}(ds)$ for each $t \geq 0, A \in {\mathcal B}(G)$. It is clear that if $L$ is $K$--bi--invariant, then so is $L^{S}$, and we make this assumption henceforth. Then for all $\pi \in \G_{s}, t \geq 0$

\begin{equation} \label{FTsub}
\widehat{\mu^{S}_{t}}(\phi_{\pi}) = e^{-t \psi(\chi_{\pi})}.
\end{equation}

The subordinated semigroup $(P_{t}^{S}, t \geq 0)$, which is the transition semigroup of the process $L^{S}$, is defined as

$$ P_{t}^{S}f(g) = \int_{0}^{\infty}P_{s}f(g)\lambda_{t}(ds), $$

for all $f \in L^{2}(G), g \in G, t \geq 0$, and by (\ref{fspec}) and (\ref{FTsub}) we deduce that for all $t \geq 0, \pi \in \G, 1 \leq i, j \leq d_{\pi}$,

\begin{equation} \label{fspecsub}
P_{t}^{S}\pi_{ij}  =  \left\{\begin{array}{c c} e^{-t \overline{\psi(\chi_{\pi})}}
\pi_{ij} & \mbox{if}~i = 1, \pi \in \G_{s}\\
 0 & \mbox{otherwise} \end{array} \right. \end{equation}

If $L(t)$ has a density $\rho_{t}$ for all $t > 0$, then $L^{S}(t)$ has a density $\rho^{S}_{t}$ given by
$\rho^{S}_{t}(g) = \int_{0}^{t}\rho_{s}(g)\lambda_{t}(ds)$, for each $g \in G$. %We assume from now on that $\rho^{S}_{t}$ is continuous for all $t > 0$. The next result gives some conditions that enable this to hold.
From now on, let $L$ be $K$--bi--invariant Brownian motion on $G$ (denoted $\widetilde{\psi}$ above), so that $k_{t}$ is the heat kernel on $X$:
$$ k_{t}(gK,hK) = \sum_{\pi \in \G_{s}}d_{\pi}e^{-t \kappa_{\pi}}\phi_{\pi}(g^{-1}h),$$
for $t > 0, g,h \in G$. Then as $t \rightarrow 0$, we have the well-known asymptotic behaviour (see e.g. \cite{Feg}):
$$ k_{t}(o, o) \sim \frac{\mbox{Vol(X)}}{(4\pi)^{d/2}}t^{-d/2}.$$

%\href{}{}Let $\alpha = \min_{\pi \in $

\begin{theorem}  If $b > 0$  or $\gamma: = \inf_{y > 0}\int_{(0,1)}ue^{- uy}\tau(du) > 0$ then $\rho^{S}_{t}$ is continuous for all $t > 0$.
\end{theorem}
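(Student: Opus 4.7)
The plan is to reduce the continuity question to an $L^{2}$-summability check, via Liao's equivalence (Theorem \ref{Lt}) and the spherical Plancherel criterion (Theorem \ref{L2}(3)). Since $L^{S}$ inherits $K$-bi-invariance from $L$, and subordination preserves the convolution semigroup property, $(\mu_{t}^{S}, t \geq 0)$ is a $K$-bi-invariant convolution semigroup on $G$. For $K$-bi-invariant Brownian motion, Gangolli's formula (\ref{Gang2}) gives $\chi_{\pi} = \kappa_{\pi} \geq 0$, so (\ref{FTsub}) yields $|\widehat{\mu_{t}^{S}}(\phi_{\pi})|^{2} = e^{-2t\psi(\kappa_{\pi})}$. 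Thus by Theorem \ref{L2}(3) followed by Theorem \ref{Lt}, it suffices to establish
$$\sum_{\pi \in \G_{s}} d_{\pi} e^{-2t\psi(\kappa_{\pi})} < \infty \quad \text{for every } t > 0.$$

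The crux of the argument is a linear lower bound $\psi(u) \geq c u$ for some $c > 0$. If $b > 0$, this is immediate from (\ref{Bern}) with $c = b$, since the $\tau$-integral is non-negative. Assume instead that $\gamma > 0$ (with $b$ possibly zero). Differentiation under the integral in (\ref{Bern}) is legitimate because $\int_{(0,1)} y \, \tau(dy) < \infty$ and $\tau([1,\infty)) < \infty$ (both a consequence of $\tau$ being a L\'evy measure), giving for every $u > 0$
$$\psi'(u) = b + \int_{(0,\infty)} y e^{-yu}\tau(dy) \geq \int_{(0,1)} y e^{-yu}\tau(dy) \geq \gamma,$$
where the last inequality is the definition of $\gamma$ (after relabelling its bound variables). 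Since $\psi(0) = 0$, integrating from $0$ to $u$ yields $\psi(u) \geq \gamma u$, so one may take $c = \gamma$.

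With $\psi(\kappa_{\pi}) \geq c \kappa_{\pi}$ secured,
$$\sum_{\pi \in \G_{s}} d_{\pi} e^{-2t\psi(\kappa_{\pi})} \leq \sum_{\pi \in \G_{s}} d_{\pi} e^{-2tc\kappa_{\pi}},$$
and the right-hand side is precisely the value of the spherical heat kernel $k_{2tc}(o,o)$ displayed immediately before the theorem, which is finite for every $t > 0$ (indeed with short-time behaviour of order $(2tc)^{-d/2}$). Hence $\mu_{t}^{S}$ admits a square-integrable density, and Theorem \ref{Lt} promotes this to continuity of $\rho_{t}^{S}$, completing the argument. The only subtle point is the $\gamma > 0$ case: one must recognise that $\gamma$ is naturally a lower bound for $\psi'$ rather than for $\psi$, so passing through differentiation under the integral is indispensable for extracting the linear lower bound on $\psi$.
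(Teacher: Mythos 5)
Your proof is correct and follows essentially the same route as the paper: a linear lower bound $\psi(u) \geq (b+\gamma)u$ (which the paper extracts via the elementary inequality $1-e^{-x} \geq x e^{-x}$ inside the Bernstein formula, and you extract via the legitimate differentiation $\psi'(u) = b + \int_{(0,\infty)} y e^{-yu}\tau(dy) \geq \gamma$ followed by integration from $0$), comparison of the resulting spectral sum with the finite heat trace $\sum_{\pi \in \G_{s}} d_{\pi}e^{-s\kappa_{\pi}}$, and Liao's equivalence (Theorem \ref{Lt}) to upgrade square--integrability to continuity. The only cosmetic difference is that you certify the $L^{2}$ density through the spherical Plancherel criterion of Theorem \ref{L2}(3) applied to $\widehat{\mu^{S}_{t}}(\phi_{\pi}) = e^{-t\psi(\kappa_{\pi})}$, whereas the paper verifies that $P^{S}_{t}$ is trace--class and invokes Theorem 5.4.4 of \cite{App1}; both reduce to the same finiteness check.
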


\begin{proof} We will show that, under the stated condition,  $P_{t}^{S}$ is trace--class for all $t > 0$. Then $\rho^{S}_{t}$ exists and is square--integrable by Theorem 5.4.4 in \cite{App1}. It follows that $\rho^{S}_{t}$ is continuous by Theorem \ref{Lt}. Using (\ref{fspecsub}) and (\ref{Bern}) and the fact that $\widehat{\mu_{t}}(\phi_{\pi}) = e^{-t\kappa_{\pi}}$ for all $\pi \in \G_{s}$, we find that for some $0 < \theta < 1$,
 \bean \mbox{Tr}(P_{t}^{S}) & = & \sum_{\pi \in \G_{s}}d_{\pi}e^{-t \psi(\kappa_{\pi})}\\
 & \leq &  \sum_{\pi \in \G_{s}}d_{\pi}e^{-tb \kappa_{\pi}}\exp{\left\{-t \int_{(0, 1)}(1 - e^{-u \kappa_{\pi}})\tau(du)\right\}} \\
 & = & \sum_{\pi \in \G_{s}}d_{\pi}e^{-tb \kappa_{\pi}}\exp{\left\{-t \kappa_{\pi} \int_{(0, 1)}ue^{- \theta u \kappa_{\pi}}\tau(du)\right\}} \\
 %& \leq & \sum_{\pi \in \G_{s}}d_{\pi}e^{-tb \kappa_{\pi}}\exp{\left\{-t \int_{0}^{1}u\tau(du) \kappa_{\pi}e^{-\kappa_{\pi}} \right\}} \\
 & \leq & \sum_{\pi \in \G_{s}}d_{\pi}e^{-tb \kappa_{\pi}} e^{-t \gamma \kappa_{\pi}}. \eean
  If $b > 0$, we have
 $$ \mbox{Tr}(P_{t}^{S}) \leq \sum_{\pi \in \G_{s}}d_{\pi}e^{-tb \kappa_{\pi}} < \infty,$$
 since the right hand side is the trace of a heat kernel semigroup (with variance $b$) which we know to be finite. The other case is similar.

 \end{proof}

We assume from now on that $\rho^{S}_{t}$ is continuous for all $t > 0$.

%{\bf Note.} We included the jump term in the above estimate to illustrate that it is difficult to find a straightforward condition on this part that guarantees summability. Nonetheless, we know this must be possible as it works for $\alpha$--stable subordinators.

%\vspace{5pt}

We obtain a subordinated $G$--invariant Feller process $Y^{S}$ on $X$, where for all $t \geq 0, Y^{S}(t): = \xi(L^{S}(t))$. Then $Y^{S}$ inherits a transition density from $Y$ which is given by
$$ k_{t}^{S}(x, y) = \int_{(0, \infty)}k_{s}(x, y)\lambda_{t}(ds),$$
for $t \geq 0, x, y \in X$. By Theorem \ref{dequiv}, we have $k_{t}^{S}(gK, hK) = \rho_{t}^{S}(g^{-1}h)$, and so $k_{t}^{S}$ inherits joint continuity from the assumed continuity of $\rho^{S}_{t}$.

By Theorem \ref{kerexp}, we have the Fourier expansion:

$$k_{t}^{S}(gK, hK) = \sum_{\pi \in \G_{s}}d_{\pi}e^{-t \psi(\kappa_{\pi})}\phi_{\pi}(g^{-1}h),$$
for all $g, h \in G, t > 0$.

%Bertoin p.75

If we assume that the Bernstein function $\psi$ has an increasing inverse $\psi^{-1}$, and that $\psi$ is regularly varying at infinity with index $r > 0$, then the result of \cite{BaBa} yields, as $t \rightarrow 0$,

\begin{equation} \label{asym}
k_{t}^{S}(o, o) \sim \frac{\mbox{Vol(X)}\Gamma(d/2r + 1)}{(4\pi)^{d/2}\Gamma(d/2 + 1)}\psi^{-1}(1/t)^{d/2}.
\end{equation}

In particular, if we take $\psi(t) = t^{\alpha}$, so that $\widehat{\mu^{S}_{t}}(\phi_{\pi}) = e^{- \kappa_{\pi}^{\alpha}}$ for $0 < \alpha < 1$, then
$$ k_{t}^{S}(o, o) \sim \frac{\mbox{Vol(X)}\Gamma(d/2r + 1)}{(4\pi)^{d/2}\Gamma(d/2 + 1)}t^{-d/2\alpha}.$$

For a sub--class of subordinators, which includes many important examples, a more explicit asymptotic (series) expansion, which generalises that of the heat kernel, can be found in \cite{Fa}.

\section{Invariant Feller Processes on the Sphere}

Let $S:= S^{d-1}$ be the $(d-1)$--dimensional unit sphere embedded in $\R^{d}$ (where $d \geq 3$), so that
$$ S^{d}: = \{x = (x_{1}, \ldots, x_{d}) \in \R^{d}; ||x|| = 1\}.$$
Then $G$ is a compact symmetric space with $G = SO(d)$ and $K = SO(d-1)$. As is conventional, we take the point $o$ to be the ``north pole'' $e_{d}$, where $(e_{1}, \ldots, e_{d})$ is the natural basis in $\Rd$. The introductory material that follows is mainly based on \cite{Far}, Chapter 9. The required Ad--invariant metric on $G$ is obtained by equipping its Lie algebra ${\bf so(d)}$ with the negation of its Killing form to induce the inner product
$$ \la A, B \ra = (2-d)\tr(AB),$$
for each $A, B \in {\bf so(d)}$.

The double cosets are the orbits of $K$ in $S$, and these are themselves spheres of dimension $d-2$. These ``parallels'' may be labelled by the co-latitude $\theta \in [0, \pi]$. To make this more precise, observe that the mapping $\zeta: (0, \pi) \times S^{d-2} \rightarrow S^{d-1} \setminus \{\pm e_{d}\}$ is a diffeomorphism, where for each $\theta \in (0, \pi), y \in S^{d-2}$,
$$ \zeta((\theta, y)) = \sin(\theta)y + \cos(\theta)e_{d}.$$
From this we deduce that a continuous mapping $f:S \rightarrow \R$ is $K$--bi--invariant, if and only if it is zonal, i.e. for all $x \in S \setminus \{\pm e_{d}\}$,
$ f(x) = (f \circ \zeta^{-1})(\theta, u)$ depends only on $\theta$, and so we may write $f(x) = F(x_{d})$ for all $x \in S$,
where $F: [-1, 1] \rightarrow \R$ is continuous. For such zonal functions, we have the integral formula:

\begin{equation} \label{Sintf}
\int_{S}f(x)\sigma(dx) = \frac{\Gamma(\frac{d}{2})}{\sqrt{\pi}\Gamma(\frac{d-1}{2})}\int_{-1}^{1}F(t)(1 - t^{2})^{\frac{d-3}{2}}dt,
\end{equation}
so in (\ref{doubleinv}), we have $E = [-1, 1]$ and $\alpha_{d}(dt) = \frac{\Gamma(\frac{d}{2})}{\sqrt{\pi}\Gamma(\frac{d-1}{2})}(1 - t^{2})^{\frac{d-3}{2}}dt$.

The irreducible representation of $SO(d)$ are all spherical, and are indexed by $\Z_{+}$. They act on the spaces ${\mathcal H}_{n}$ of spherical harmonics that have dimension $d_{n}$ for $n \in \Z_{+}$, where:
$$ d_{n} = \binom{d + n -1}{d-1} - \binom{d + n -3}{d-1}.$$
For all $\nN$, there is a unique spherical function $\phi_{n}$ in ${\mathcal H}_{n}$ which is normalised and $K$--invariant. These functions are given in terms of $p_{n}^{d}:[-1, 1] \rightarrow \R$ by
$$ \phi_{n}(\theta) = p_{n}^{d}(\cos(\theta)) = \int_{-1}^{1}(\cos(\theta) + iy \sin(\theta))^{n}\alpha_{d-1}(dy).$$
If $d = 3$, then $p_{n}^{d}$ is a Legendre polynomial. More generally, for $d \geq 3$, the $p_{n}^{d}$'s are related to the ultraspherical (Gegenbauer) polynomials $G_{n}^{\nu}$ (where $\nu \in [0, \infty)$) as follows:
$$ p_{n}^{d}(t) = \binom{n+d-3}{n}^{-1}G_{n}^{\frac{d-2}{2}}(t),$$
for all $t \in [-1, 1]$. Finally we have the generating function identity:
$$ \sum_{n=0}^{\infty}r^{n}G_{n}^{\nu}(t) = \frac{1}{(1 + r^{2} - 2rt)^{\nu}},$$
for $r \in [0, 1)$ (see \cite{Mull}, pp. 44--50 for details).

The Laplace--Beltrami operator diagonalises on ${\mathcal H}_{n}$, and for each $n \in \Z_{+}$, we have
$$ \Delta \phi_{n} = -n(n+d-2)\phi_{n},$$
so the Casimir spectrum is given by $\kappa_{n} = n(n+d -2)$. The Gangolli L\'{e}vy--Khintchine formula (\ref{Gang2}) then takes the form

\begin{equation} \label{Gangs}
\chi_{n} = a n(n+d -2) + \int_{0}^{\pi}(1 - p_{n}(\cos(\theta))\nu(d\theta),
\end{equation}
where $a \geq 0$ and $\int_{0}^{\pi}(1 - \cos(\theta))\nu(d\theta) < \infty$ (see Theorem 3 in \cite{Bo}, \cite{Hey1}, and \cite{Gal}, Theorem 5.1 for the case $d=3$ within a more general context).

Now let $Y$ be a $SO(d)$--invariant Markov process on $S$ having a continuous transition density. Hence for each $t > 0, f \in L^{2}(S), x \in S$, we may write the transition semigroup
$$ (Q_{t}f)(x) = \int_{S}k_{t}(x,y)f(y)\sigma(dy).$$
Then as shown in \cite{Far} pp.204--5 for more general integral operators having this form, the $SO(d)$--invariance of the kernel determines the existence of a continuous (non--negative) real-valued function $a_{t}$ on $[-1,1]$ so that
$$ k_{t}(x,y) = a_{t}(x \cdot y),$$
for all $t > 0, x, y \in S$, where $\cdot$ denotes the usual scalar product in $\R^{d}$. The Chapman--Kolmogorov equations take the form
$$ a_{s+t}(x \cdot y) = \int_{S}a_{s}(x  \cdot z)a_{t}( z \cdot y)\sigma(dz),$$ for each $s, t \geq 0$.

From the results of the previous section, we know that for each $t > 0, n \in \Z_{+}, {\mathcal H}_{n}$ is an eigenspace for the operator $Q_{t}$, and that the eigenvalue $e^{-t \chi_{n}}$ has multiplicity $d_{n}$. But by the Funk--Hecke theorem (see Theorem 9.5.3 in \cite{Far} p.205--6), we have for all $t > 0, n \in \Z_{+}$,

$$ e^{-t \chi_{n}} = \frac{\Gamma(\frac{d}{2})}{\sqrt{\pi}\Gamma(\frac{d-1}{2})}\int_{-1}^{1}a_{t}(s)p_{n}(s)(1 - s^{2})^{\frac{d-3}{2}}ds.$$
%Then if $t \rightarrow a_{t}(s)$ is differentiable at $t = 0$ for all $s \in [-1, 1]$ and $s \rightarrow a^{\prime}_{0}(s)$ is continuous on $[-1,1]$, we obtain the intriguing formula
%$$ \chi_{n} = -\frac{\Gamma(\frac{d}{2})}{\sqrt{\pi}\Gamma(\frac{d-1}{2})}\int_{-1}^{1}a_{0}^{\prime}(s)p_{n}(s)(1 - s^{2})^{\frac{d-3}{2}}ds.$$
It would be interesting to determine the class of all such functions $a: (0,\infty) \times [-1, 1] \rightarrow [0, \infty)$ that arise in this way.

\vspace{5pt}

\begin{center}
{\it Acknowledgement}
\end{center}

We thank Ming Liao for making \cite{LiaoN} available to us at an early stage, for helpful conversations, and invaluable comments on an early draft of the paper.

\end{document}